\newtheorem{thm}{Theorem}[section]
\newtheorem{cor}[thm]{Corollary}
\newtheorem{prop}[thm]{Proposition}
\newtheorem{lem}[thm]{Lemma}
\newcommand{\Z}{\mathbb{Z}}
\newcommand{\N}{\mathbb{N}}
\newcommand{\pic}[1]{\begin{minipage}{0.83in} \[\includegraphics[width=0.8in]{{#1}} \]\end{minipage}}
\title{Roger and Yang's Kauffman Bracket Arc Algebra is Finitely Generated}
\author{Martin Bobb}
\address{Department of Mathematics, University of Texas at Austin, Austin, TX 78712, USA}
\author{Dylan Peifer}
\address{Department of Mathematics, Cornell University, Ithaca, NY 14853, USA}
\author{Helen Wong}
\address{Department of Mathematics, Carleton College, Northfield, MN 55057, USA}
\thanks{The authors were supported in part by NSF Grant DMS-1105692. }
\begin{document}

\maketitle

\begin{abstract}
We provide an explicit, finite set of generators for the Kauffman bracket arc algebra defined by Roger and Yang.  
\end{abstract}



It is widely believed that quantum topology and  hyperbolic geometry are deeply related \cite{KashaevVolume, MurakamiMurakami, KashaevInv, KBB,  Hikami, GarouSlope, LeSlope}, but  there are few existing connections between them which are  concretely phrased and well-understood.    
Recently defend by J. Roger and T. Yang in \cite{RoYa14}, the \emph{Kauffman bracket skein algebra of arcs and links} (or more shortly the \emph{Kauffman bracket arc algebra}) of a punctured surface might be one point of attack.   

The Kauffman bracket arc algebra of a punctured surface generalizes the \emph{Kauffman bracket skein algebra}, which was defined by Turaev \cite{Tu88} and Przytycki \cite{Pr91} based on L. Kauffman's skein theoretic description of the Jones polynomial in \cite{Ka87}. Like the skein algebra, the arc algebra is defined combinatorially.  It is generated by  unions of framed links and framed arcs whose endpoints are at the punctures of the surface.  The arc algebra involves four relations---two are the usual skein relations from the skein algebra and another two are versions of those involving arcs and punctures.


Roger and Yang designed the  arc algebra so that it is  a quantization of the decorated Teichm\"uller space studied by \cite{Penner} and others.    The combined work in  \cite{TuraevPoisson, BullockFrohmanJKB, BullockFrohmanJKB2, PrzSikora} established an explicit relationship between the skein algebra and  the Teichm\"uller space from hyperbolic geometry by interpreting the skein algebra of a hyperbolic surface as a quantization of the $\mathrm{PSL}_2(\mathbb C)$-character variety of the fundamental group for that surface.   
The Roger and Yang approach follows the same line for the arc algebra, replacing the Atiyah-Bott-Goldman Poisson structure of the character variety by the Weil-Petersson Poisson structure on the decorated Teichmuller space identified by Mondello \cite{Mondello}.  

The purpose of this paper is to show that the arc algebra is finitely generated, and to provide an explicit set of generators.  This  generalizes Bullock's result in \cite{Bu99} for the skein algebra.    Interestingly, when a surface has five or more punctures, the generating set for the arc algebra presented here is fewer in number than Bullock's generating set for the skein algebra.  In \cite{BobbPeiferKennedyWong}, the authors also show that the arc algebra is finitely presented for some small surfaces. 
\smallskip

{\bf Acknowledgements.}  The authors would like to thank Stephen Kennedy, Francis Bonahon and Tian Yang for helpful discussions, and the Carleton College mathematics department for their support throughout this research.    The authors would also like to thank the anonymous reviewer for pointing out a mistake in an earlier version.  

\section{The arc Algebra}

Let $F_{g,n}$ denote a compact, orientable surface of genus $g$ with $n$ points (the punctures) removed.    Let $A$ be an indeterminate, with formal square roots $A^{\frac12}$ and $A^{-\frac12}$.   In addition, let there be an indeterminate $v_i$ associated to the $i$th  puncture.  Let $R_n = \Z[A^{\pm\frac{1}{2}}][v_1^{\pm1}, v_2^{\pm1}, \dots, v_n^{\pm1}]$  denote the ring of Laurent polynomials in the commuting variables $A^{\frac12}$ and $v_1, \ldots, v_n$.      

A framed curve in the thickened surface $F_{g,n} \times [0,1]$ is the union of  framed knots and framed arcs that go from puncture to puncture.   See \cite{RoYa14} for a precise definition.     The \emph{arc algebra} $\mathcal A(F_{g,n})$ is generated as an $R_n$-module by framed curves in $F_{g,n} \times [0,1]$, up to isotopy and subject to the following four relations:
\begin{align*}
&1)
\quad
\begin{minipage}{.5in}\includegraphics[width=\textwidth]{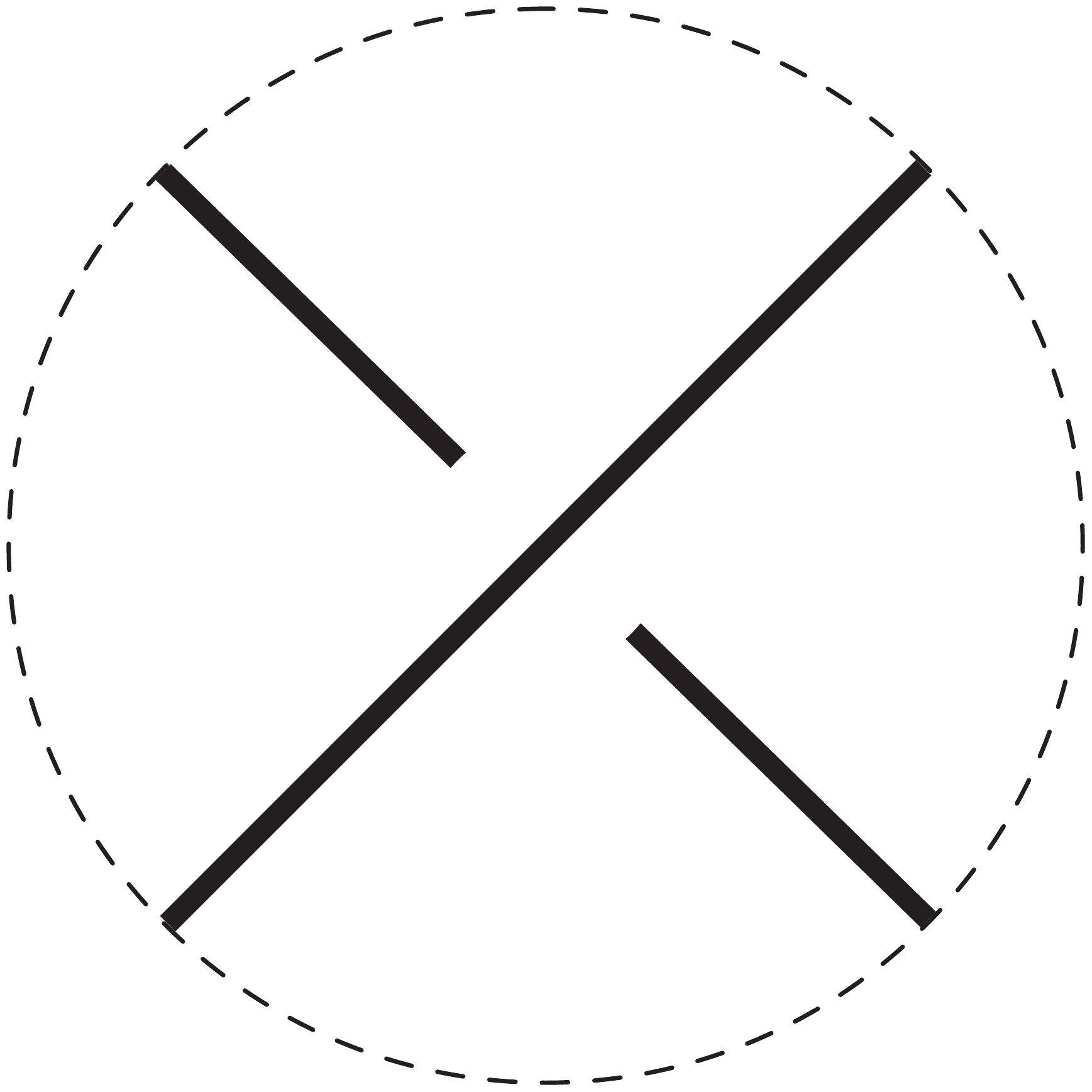}\end{minipage} 
=  A\begin{minipage}{.5in}\includegraphics[width=\textwidth]{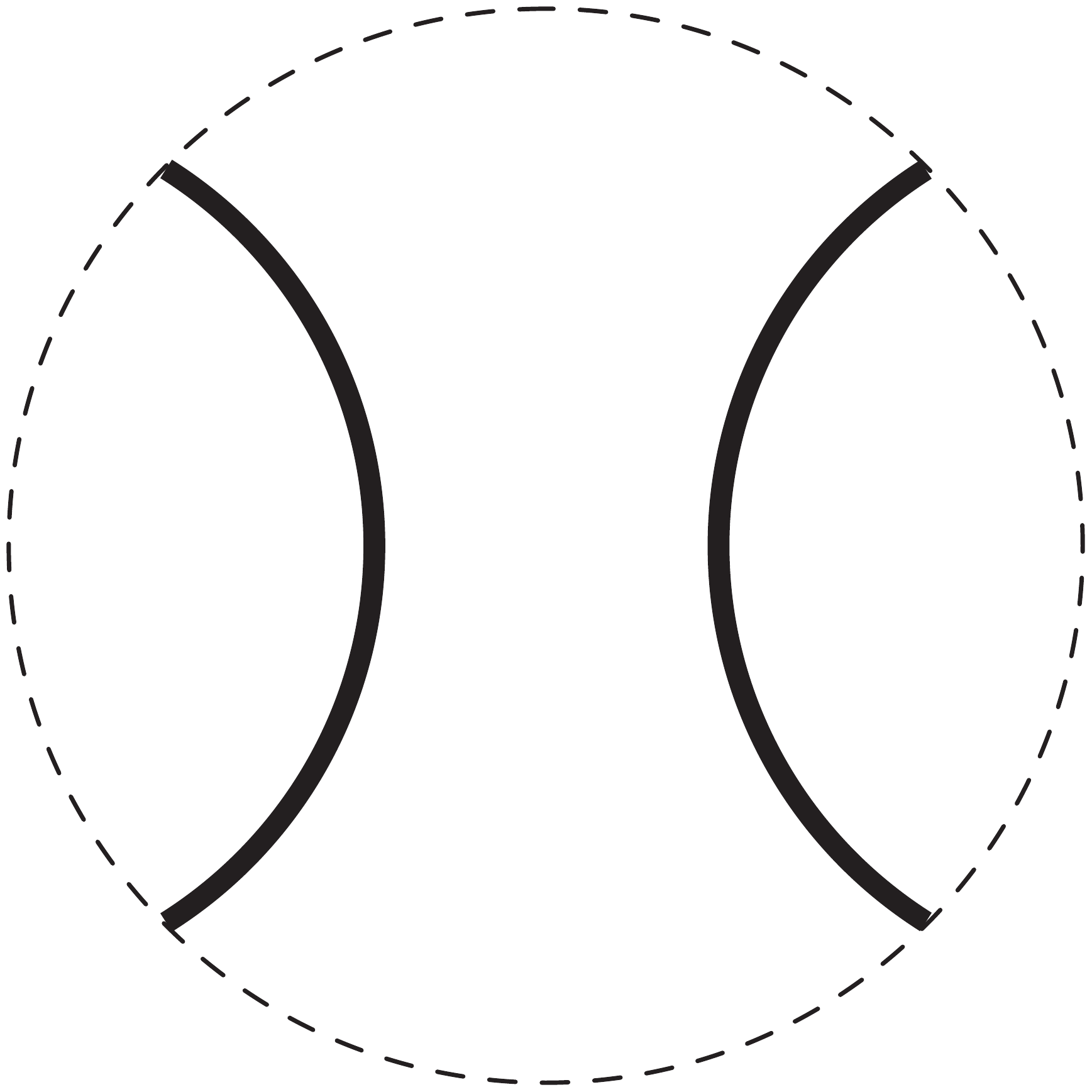}\end{minipage} 
+A^{-1}\begin{minipage}{.5in}\includegraphics[width=\textwidth]{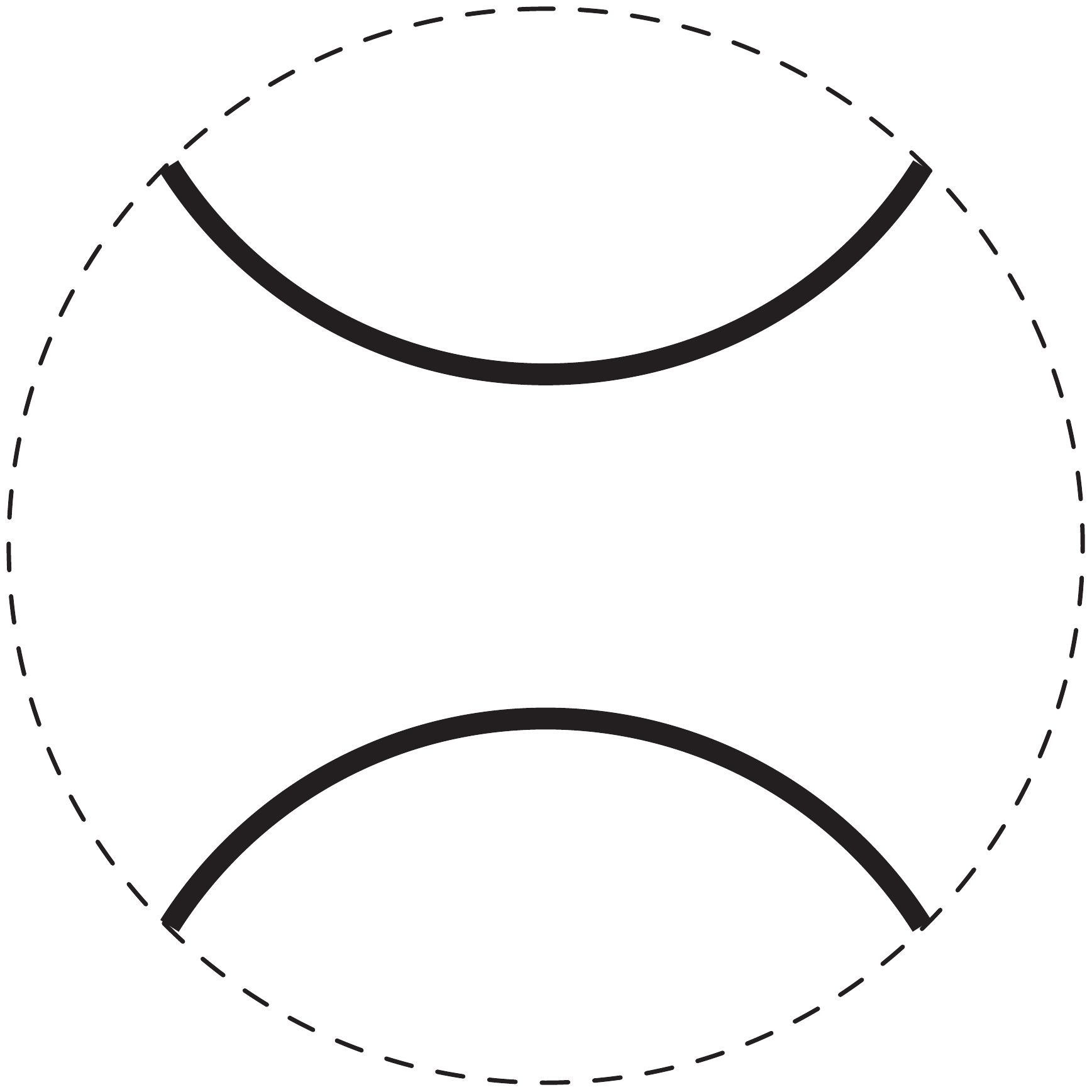}\end{minipage}  
&\text{Skein Relation}\\
&2)
\quad 
v_i \begin{minipage}{.5in}\includegraphics[width=\textwidth]{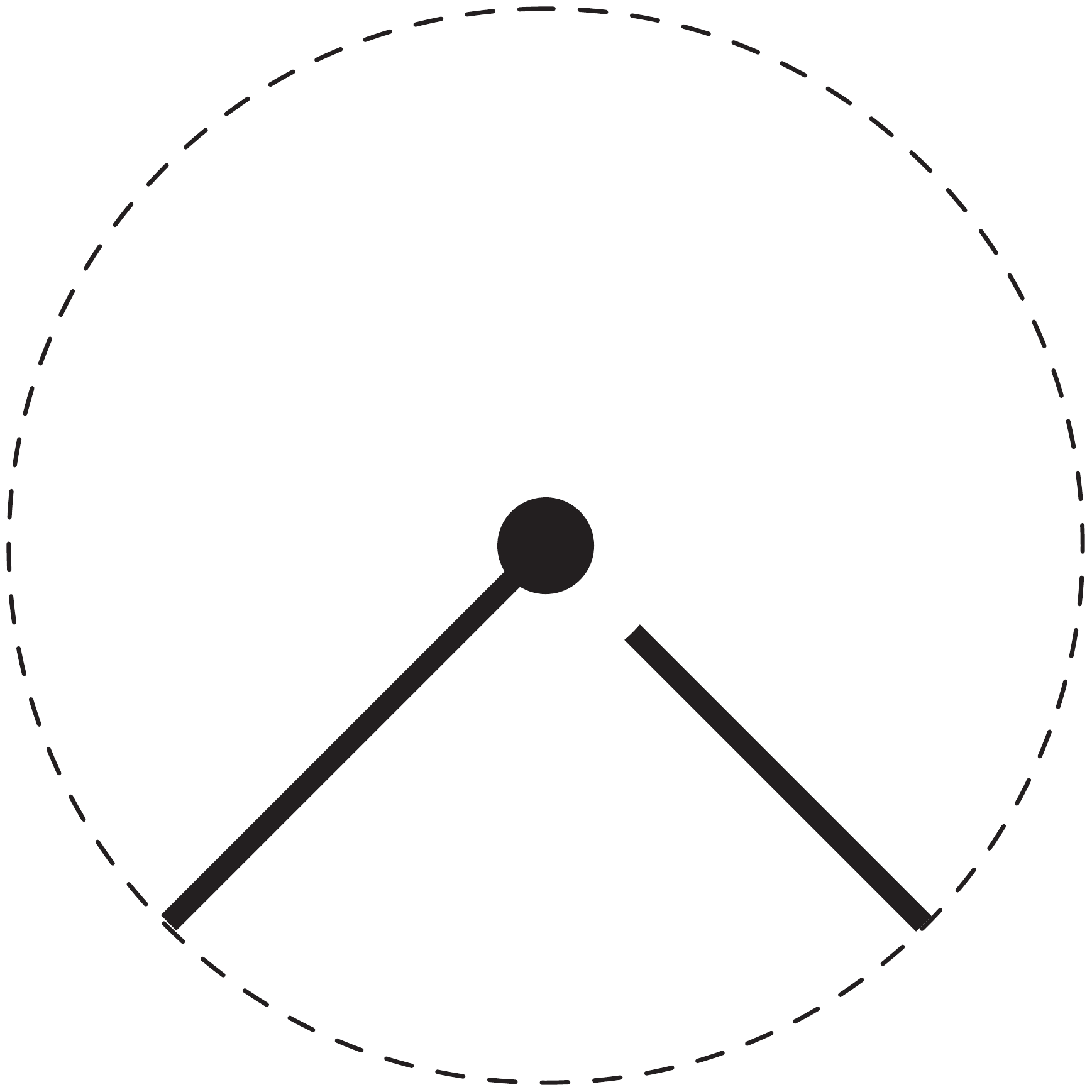}\end{minipage} 
=  A^\frac{1}{2}\begin{minipage}{.5in}\includegraphics[width=\textwidth]{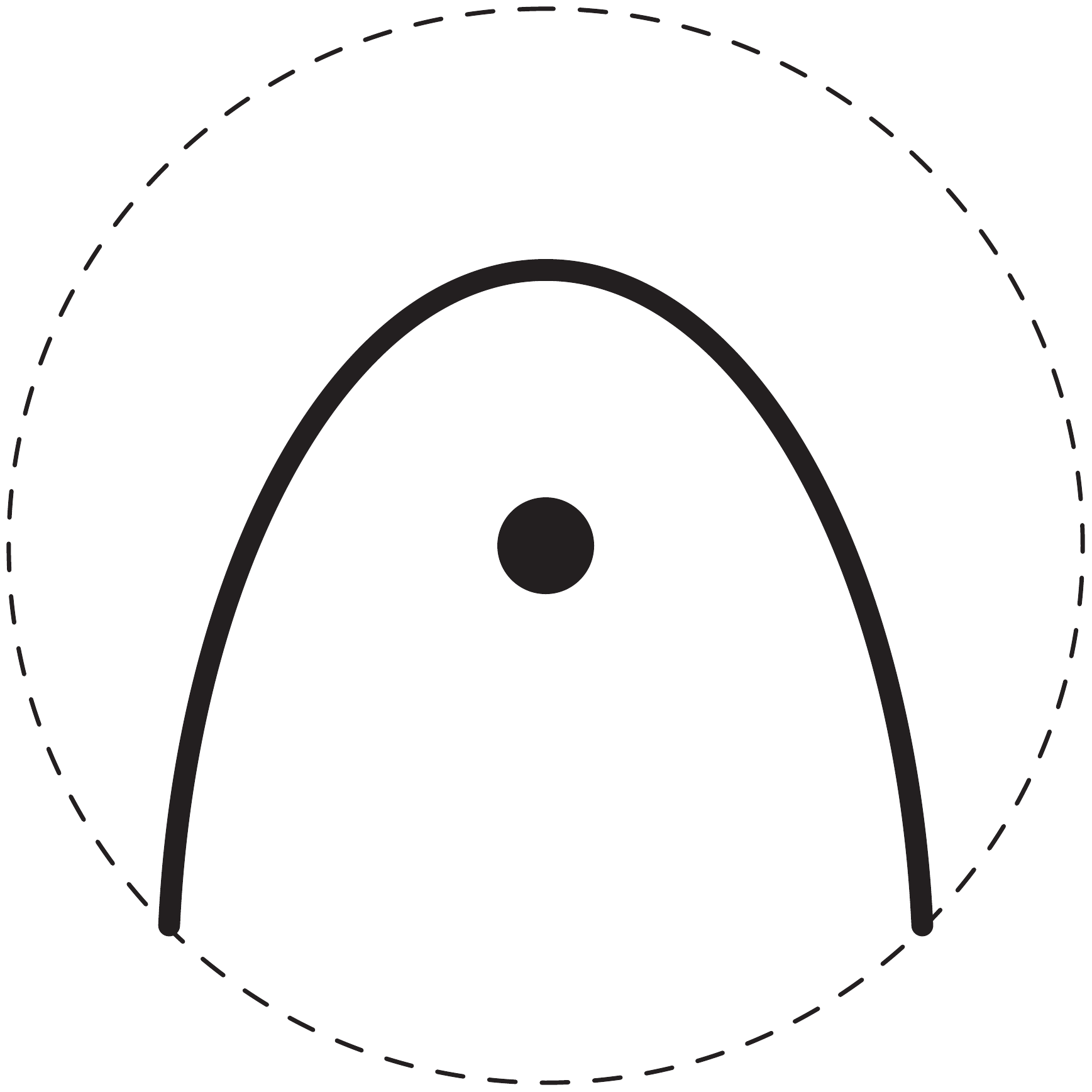}\end{minipage} + A^{-\frac{1}{2}}\begin{minipage}{.5in}\includegraphics[width=\textwidth]{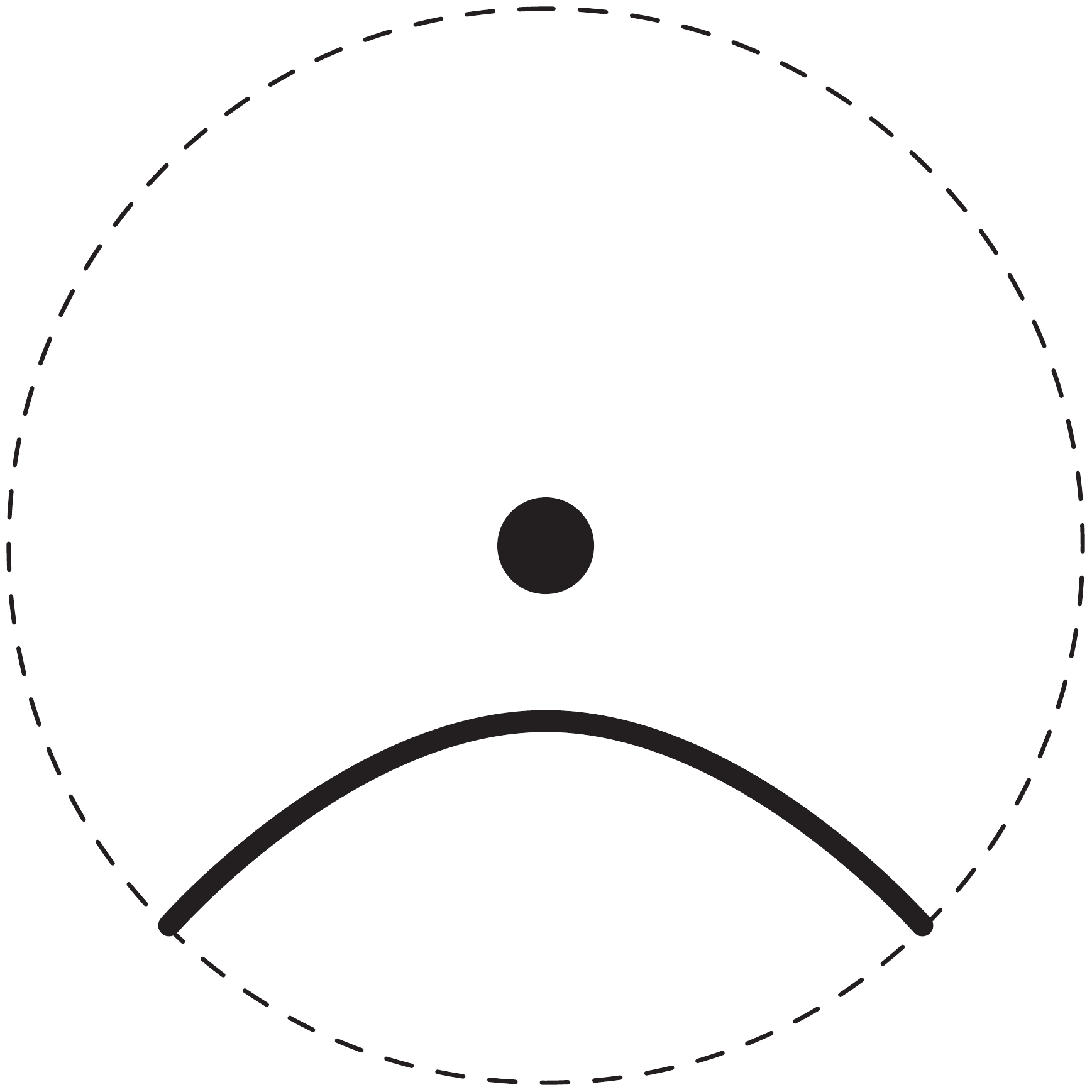}\end{minipage}  
&\text{Puncture-Skein Relation on $i$th puncture} \\
&3)
\quad 
\begin{minipage}{.5in}\includegraphics[width=\textwidth]{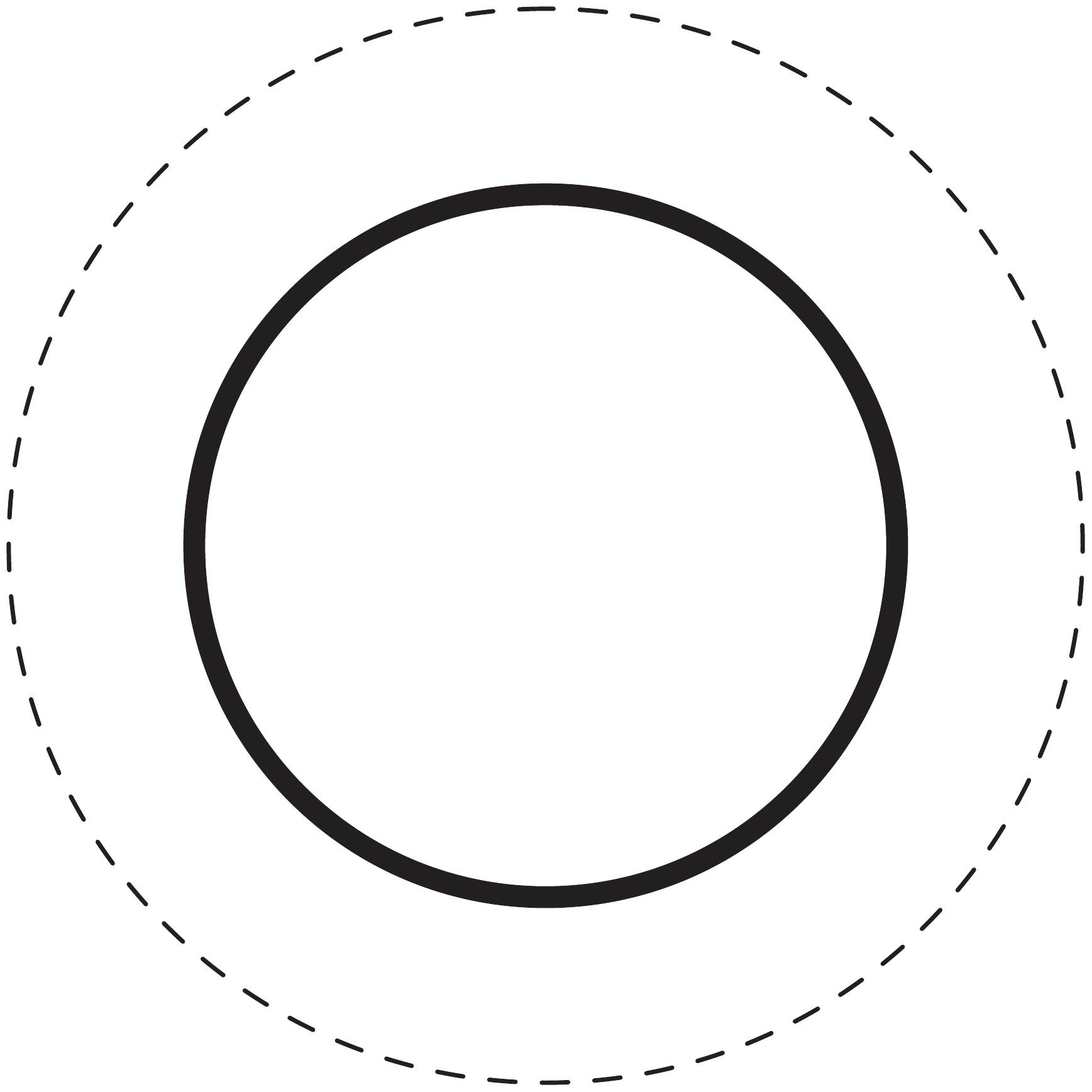} \end{minipage} 
= - A^2 - A^{-2} 
&\text{Framing Relation} \\
&4)
\quad 
\begin{minipage}{.5in}\includegraphics[width=\textwidth]{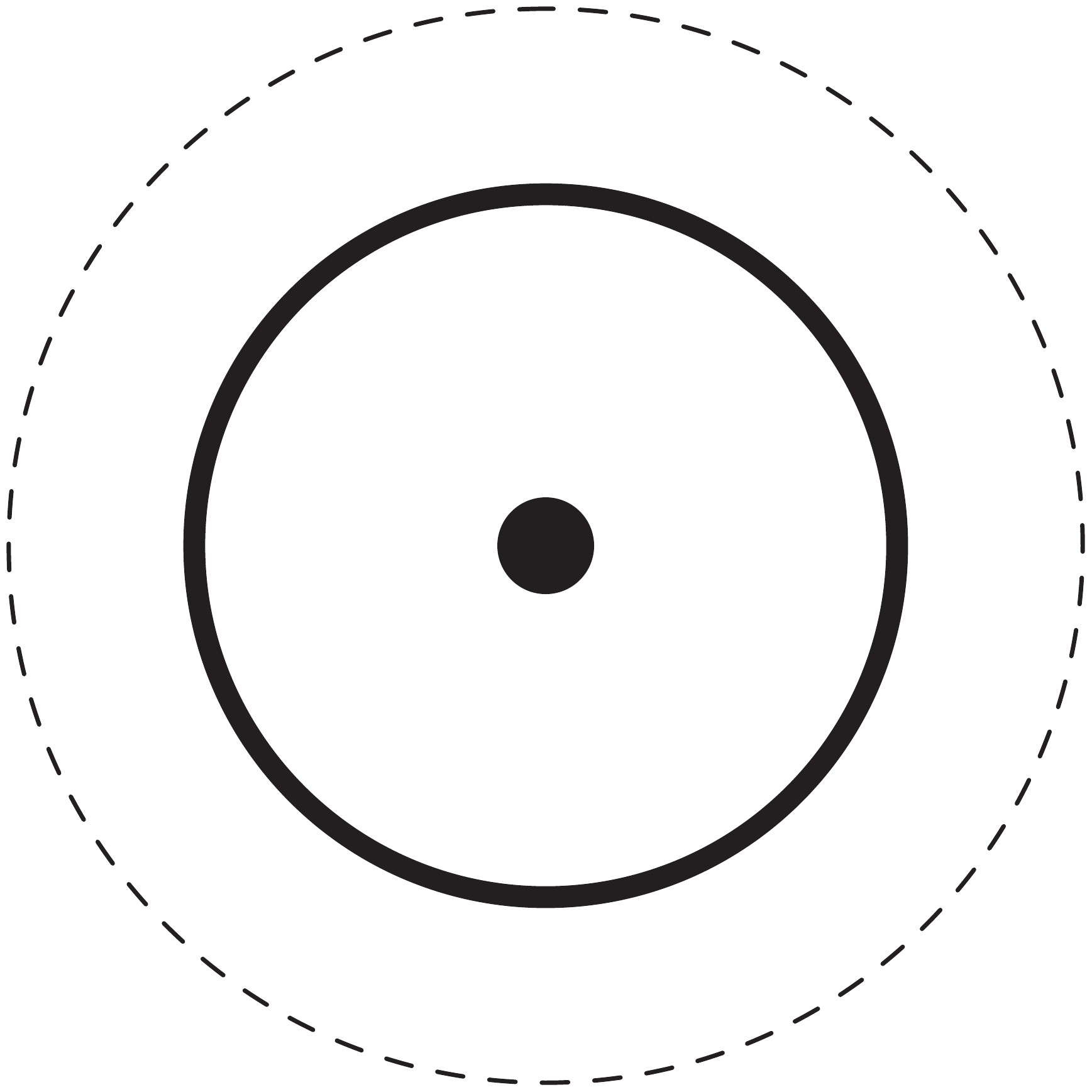} \end{minipage} 
= A + A^{-1} 
&\text{Puncture-Framing  Relation} 
\end{align*}
The links in any one relation are assumed to be identical outside of the small balls depicted.  The framings are vertical.  
The algebraic structure of $\mathcal A(F_{g,n})$ is induced by stacking elements.  That is, if $[L_1], [L_2] \in \mathcal A(F_{g,n})$ are respectively represented by framed curves $L_1, L_2$ in $F_{g,n} \times[0, 1]$, the product $[L_1]\ast[L_2] = [L_1' \cup L_2'] \in \mathcal A(F_{g,n})$ is represented by the union of the framed link $L_1' \subset F_{g,n} \times [0, \frac12]$ obtained by rescaling $L_1 \subset F_{g,n} \times [0, 1]$ and of  the framed link $L_2' \subset F_{g,n} \times [\frac12, 1]$ obtained by rescaling $L_2 \subset F_{g,n} \times [0, 1]$.

The arc algebra is very closely related to the skein algebra defined by Turaev \cite{Tu88} and Przytycki \cite{Pr91}.  Recall that the skein algebra $\mathcal S(F_{g,n})$ is the $\Z[A, A^{-1}]$-algebra generated by framed links in 
$F_{g,n} \times[0,1]$, up to isotopy and relations 1 and 3 above, and its multiplication is also by stacking elements.  
By exploiting the relationship between the skein algebra and the arc algebra and then applying the result of Bullock in \cite{Bu99}, the authors obtained the following proposition in \cite{BobbPeiferKennedyWong}. 
\begin{prop} \label{prop:n01}
When $n = 0$ or $1$, the arc algebra $\mathcal A(F_{g, n})$ is generated by $4^{g}-1$ knots.  
\end{prop}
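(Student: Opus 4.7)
The plan is to reduce to Bullock's theorem \cite{Bu99}, which provides a generating set of $4^g - 1$ knots for the Kauffman bracket skein algebra of either $F_{g,0}$ or $F_{g,1}$. (For $n = 1$, the thickened surface $F_{g,1} \times [0,1]$ is a handlebody of genus $2g$, and Bullock's handlebody bound gives $2^{2g} - 1 = 4^g - 1$.)

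When $n = 0$, the surface has no punctures, so no arcs exist and the puncture relations (2) and (4) are vacuous. Consequently $\mathcal{A}(F_{g,0})$ coincides with the skein algebra $\mathcal{S}(F_{g,0})$ after extending scalars from $\Z[A^{\pm 1}]$ to $R_0 = \Z[A^{\pm \frac{1}{2}}]$, and the conclusion is immediate from Bullock.

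When $n = 1$, every arc has both endpoints at the unique puncture $p_1$, so the work is to show that each such arc lies in the $R_1$-subalgebra generated by closed links. Given an arc $\alpha$, one isotopes it so that its two endpoints meet transversally at $p_1$, forming a ``crossing at the puncture,'' and applies the puncture-skein relation (2):
\[
v_1 \, \alpha = A^{\frac{1}{2}} L_1 + A^{-\frac{1}{2}} L_2,
\]
where each smoothing $L_i$ resolves the crossing and, crucially, removes both of $\alpha$'s endpoints from $p_1$. Because $n = 1$, neither $L_i$ has any remaining arc-endpoints anywhere, so both are closed links. Since $v_1$ is a unit in $R_1$, one solves $\alpha = v_1^{-1}\bigl(A^{\frac{1}{2}} L_1 + A^{-\frac{1}{2}} L_2\bigr)$, expressing $\alpha$ inside the skein subalgebra. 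Invoking Bullock's theorem on that subalgebra then yields the $4^g - 1$ knot generators for all of $\mathcal{A}(F_{g,1})$.

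The main obstacle is verifying that the puncture-skein smoothings really do produce purely closed configurations; this relies essentially on there being a single puncture, so that clearing endpoints at $p_1$ clears all endpoints at once. For $n \geq 2$ the same argument breaks down, because a smoothing at one puncture leaves arc-endpoints at others --- which is precisely why the main theorem of the paper requires the more delicate combinatorial analysis developed in the sequel.
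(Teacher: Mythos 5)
Your proposal is correct and takes essentially the same approach as the paper, which obtains this proposition (with proof deferred to the companion paper \cite{BobbPeiferKennedyWong}) precisely by ``exploiting the relationship between the skein algebra and the arc algebra and then applying the result of Bullock'' --- your closing-up of an arc's two endpoints at the unique puncture via the puncture-skein relation, using that $v_1$ is a unit in $R_1$, is exactly the expected implementation of that reduction. (For unions of several arcs one iterates the resolution on consecutive strand-ends at $p_1$, but this extension is routine.)
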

In this paper, we turn to the cases when $n >1$, for which the arguments in \cite{Bu99} alone no longer suffice.   The remainder of this paper will be devoted to proving the following theorem.  

\begin{thm} \label{thm:finitegen}
For $n > 1$ the arc algebra $\mathcal A(F_{g,n})$ is generated by a set of $n(4^{g}-1)$ knots and $\frac{n(n-1)}{2} ( 4^{g})$ arcs.
\end{thm}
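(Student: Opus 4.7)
The plan is to fix a reference decomposition of $F_{g,n}$, define an appropriate complexity on framed curves, and then reduce any framed curve to an $R_n$-linear combination of ``standard'' elements lying in the subalgebra generated by the claimed finite set.

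First I would fix a collection of $n-1$ pairwise disjoint embedded arcs $\alpha_1,\dots,\alpha_{n-1}\subset F_{g,n}$ forming an embedded tree connecting all $n$ punctures, together with a standard system of $2g$ simple closed curves representing a symplectic basis of $H_1$ in the complementary genus-$g$ piece. Given a framed curve $L$, I would define its complexity $c(L)$ as the triple (number of transverse intersections of the projection $\pi(L)\subset F_{g,n}$ with the reference system, number of arc components of $L$, number of double points of $\pi(L)$), ordered lexicographically. The heart of the argument is a reduction lemma asserting that if $c(L)>0$ in an appropriate sense, then $[L]$ can be written as an $R_n$-linear combination of classes $[L_i]$ with $c(L_i)<c(L)$. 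The skein relation (1) applied at a projection crossing produces two resolutions with one fewer crossing each, and the puncture-skein relation (2), used \emph{inversely} at a puncture $p_i$, trades a short loop near $p_i$ (scaled by $v_i$) for a pair of arcs meeting at $p_i$, which is the main tool for ``cutting'' a long arc at a reference arc or at a reference curve it has been pushed near.

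By iterating the reduction, every class is expressed in terms of framed curves whose projections meet the reference system only at endpoints. Between a fixed pair of punctures $p_i,p_j$, such arcs fall into finitely many isotopy classes; a Bullock-style bookkeeping of framings together with $\Z/2$-intersection data against the $2g$ reference curves yields exactly $4^g$ classes, giving $\binom{n}{2}\cdot 4^g$ arcs. For the closed-curve part, I would adapt the argument of \cite{Bu99} together with Proposition~\ref{prop:n01}: having pushed a knot to be supported near a chosen puncture $p_i$, the knots on the once-punctured genus-$g$ subsurface surrounding $p_i$ are generated by $4^g-1$ knots, and doing this separately near each of the $n$ punctures yields $n(4^g-1)$ knots in total.

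The main obstacle will be the inductive step of the reduction lemma: the skein and puncture-skein relations produce \emph{sums} of terms, and the complexity must drop strictly on \emph{every} summand, not just on a principal one. This forces a careful, case-based choice of which relation to apply at each intersection and of the direction in which to apply the puncture-skein relation, as the wrong choice could create an arc that wraps further around the reference system rather than shortening. A secondary obstacle is to certify that the claimed count is sharp — i.e., that no further pruning reduces the number of generators — which I would verify by matching the generators produced in the reduction with the stated basis of $4^g$ arc-classes per pair of punctures and $4^g-1$ knot-classes per puncture.
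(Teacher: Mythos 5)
Your outline correctly identifies the overall strategy the paper also uses---a lexicographic complexity on curves relative to a fixed reference system, reduction via the skein and puncture-skein relations, and a Bullock-style count of the irreducible classes---but the proposal has two genuine gaps, and the first one is fatal to the induction as you have set it up. Your reference system consists of arcs $\alpha_1,\dots,\alpha_{n-1}$ joining the punctures, and your first complexity coordinate counts intersections with this system. But no relation in the arc algebra can remove an essential intersection of a curve with a tree edge at a point in the \emph{interior} of that edge: the skein relation acts at crossings of the curve with itself, and the puncture-skein relation acts only at punctures. The only available move---applying the puncture-skein relation inversely at an endpoint $p_i$ of a tree edge---drags a strand across $p_i$ and hence across \emph{every} tree edge incident to $p_i$, which can strictly \emph{increase} your first coordinate. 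So the claimed reduction lemma, which you yourself flag as ``the main obstacle,'' is not merely unverified: with this choice of reference system the lexicographic complexity need not drop, and the induction need not terminate. The paper sidesteps exactly this problem by measuring complexity against the co-cores of a handle decomposition of $F_{g,n}^*$ in which each puncture is encircled by its own small puncture handle (Figure~\ref{fig:Fgn*}); pulling a strand across puncture $i$ then strictly decreases the puncture complexity $r$ while preserving $h=b=0$ (Lemmas~\ref{lem:puncturecknots} and~\ref{lem:puncturecarcs}), and the handle and bad-pair coordinates are handled first, by arguments imported from \cite{Bu99} plus the new endpoint cases of Lemma~\ref{lem:handlec}.

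The second gap is in the knot count. The claim that a knot can be ``pushed to be supported near a chosen puncture $p_i$'' is false as stated---a simple closed curve in $F_{g,n}$ can separate the punctures arbitrarily---and even granting a reduction to knots meeting at most one puncture region, the naive count is $(n+1)(4^g-1)$: the $4^g-1$ knots meeting no puncture handle plus $n(4^g-1)$ meeting exactly one. The sharp figure $n(4^g-1)$ in the statement requires an extra geometric observation with no counterpart in your outline: a zero-complexity knot through the $n$th puncture handle of $F_{g,n}^*$ can be isotoped across the removed disk $D$ and re-expressed, by rerunning the whole reduction inside $F_{g,n-1}^*$, in terms of knots through one of only $n-1$ puncture handles (Theorem~\ref{thm:simplegenerators}); this saves the redundant $4^g-1$ generators. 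Your arc count of $\binom{n}{2}4^g$ is essentially right (your $\mathbb{Z}/2$-data against $2g$ curves, $2^{2g}=4^g$, matches the paper's four options per overlapping handle pair, justified by the uniqueness argument of Lemma~\ref{lem:determinehandle}), and note finally that your ``secondary obstacle''---certifying that no further pruning is possible---is not needed: the theorem asserts only generation by a set of this size, not minimality, which is treated separately in \cite{BobbPeiferKennedyWong}.
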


\section{Finite Generators}

We define a \emph{simple knot} to be a framed closed curve that allows a projection without any crossings and that does not bound a disk containing one or no punctures.   A \emph{simple arc} is a framed, connected arc that allows a projection without any crossings and whose endpoints are at two distinct punctures.  A \emph{simple curve} is either a simple knot or a simple arc.   
The simple curves form a basis for $\mathcal A(F_{g,n})$, regarded as an $R_n$-module (see \cite{BobbPeiferKennedyWong} or \cite{RoYa14}).    So, a set of elements generating the simple curves necessarily generates the entire algebra $\mathcal A(F_{g,n})$.  

We prove Theorem \ref{thm:finitegen} inductively using complexity functions, which are defined in Section~\ref{subsec:Def}.    We show that the simple curves are generated by ones with small complexity, and this is proved in a series of lemmas in Section~\ref{subsec:Lemmas}.     An explicit description of the finitely  many small complexity curves that generate  $\mathcal A(F_{g,n})$ is provided in Section~\ref{sec:proof}.    

Many of the ideas in our proof were inspired by Bullock's paper \cite{Bu99}, with a few nearly identical.  We will only briefly repeat arguments as necessary for completeness, and highlight modifications and new ingredients whenever appropriate.  

\subsection{Three measures of complexity} \label{subsec:Def}
Let $F_{g,n}^*$ be $F_{g,n}$ with a small disk $D$ removed.  Note that any framed curve in $F_{g,n}$ can be isotoped to avoid $D$.  Thus any set that generates $\mathcal A(F_{g,n}^*)$ will also generate $\mathcal A(F_{g,n})$.  

Choose a generalized handle decomposition of $F_{g,n}^*$ of the type indicated in Figure~\ref{fig:Fgn*}.  In particular, the decomposition consists of one handle of index $0$ and  $2g+n$ handles of index $1$; the co-cores of the 1-handles are indicated by dashed lines in Figure~\ref{fig:Fgn*}.   The $1$-handles are grouped into $g$ separate pairs of overlapping handles that are disjoint from the punctures, and $n$ handles that each go around one puncture.  The puncture handles do not overlap with each other nor with any of the pairs of overlapping handles.  Let $G$ be the set of $2g$ co-cores corresponding to the overlapping handles, and let $P$ be the set of $n$ co-cores corresponding to the puncture handles.  \begin{figure}[htpb]
\centering
\includegraphics[width=5.55in]{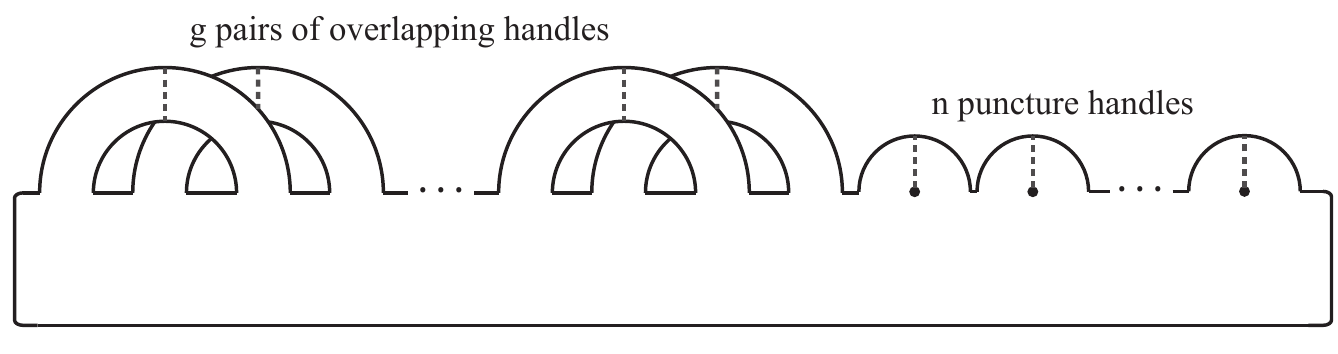}
\caption{Generalized handle decomposition of $F_{g,n}^*$. The dashed lines indicate the co-cores. }
\label{fig:Fgn*}
\end{figure}

Unless otherwise indicated, we assume that curves are framed  and are isotoped so that they intersect the co-cores $G \cup P$ minimally.   
We now define three ways to measure the complexity of a curve $\gamma$.  
To begin with, let \emph{handle complexity} $h(\gamma)$ be the number of times $\gamma$ intersects the co-cores minus the number of co-cores it intersects, i.e., $h(\gamma)= |\gamma \cap (G \cup P)| -  | \{ c \in G \cup P \; | \; \gamma \cap c \neq 0 \} |$.  For instance, if $h(\gamma) = 0$, then $\gamma$ intersects each co-core either once or not at all.   



The second measure of complexity ensures that a curve intersects pairs of overlapping handles in a prescribed way.  As in \cite{Bu99}, a pair of overlapping handles of $F_{g,n}^*$ is called a \emph{good pair} for $\gamma$ if every time $\gamma$ passes through both handles in that pair, it does so as in Figure~\ref{fig:good}; otherwise, the pair of handles is a \emph{bad pair} for $\gamma$.  Note that if $\gamma$  passes through only one of an overlapping pair of handles (possibly many times), that pair of handles remains a good pair for $\gamma$;  bad pairs can occur only if the curve intersects both handles in the pair.   Let  the \emph{bad pair complexity} $b(\gamma)$ be the number of bad pairs of handles for $\gamma$.   


%
\begin{figure}[htpb]
\centering
\includegraphics[width=1.5in]{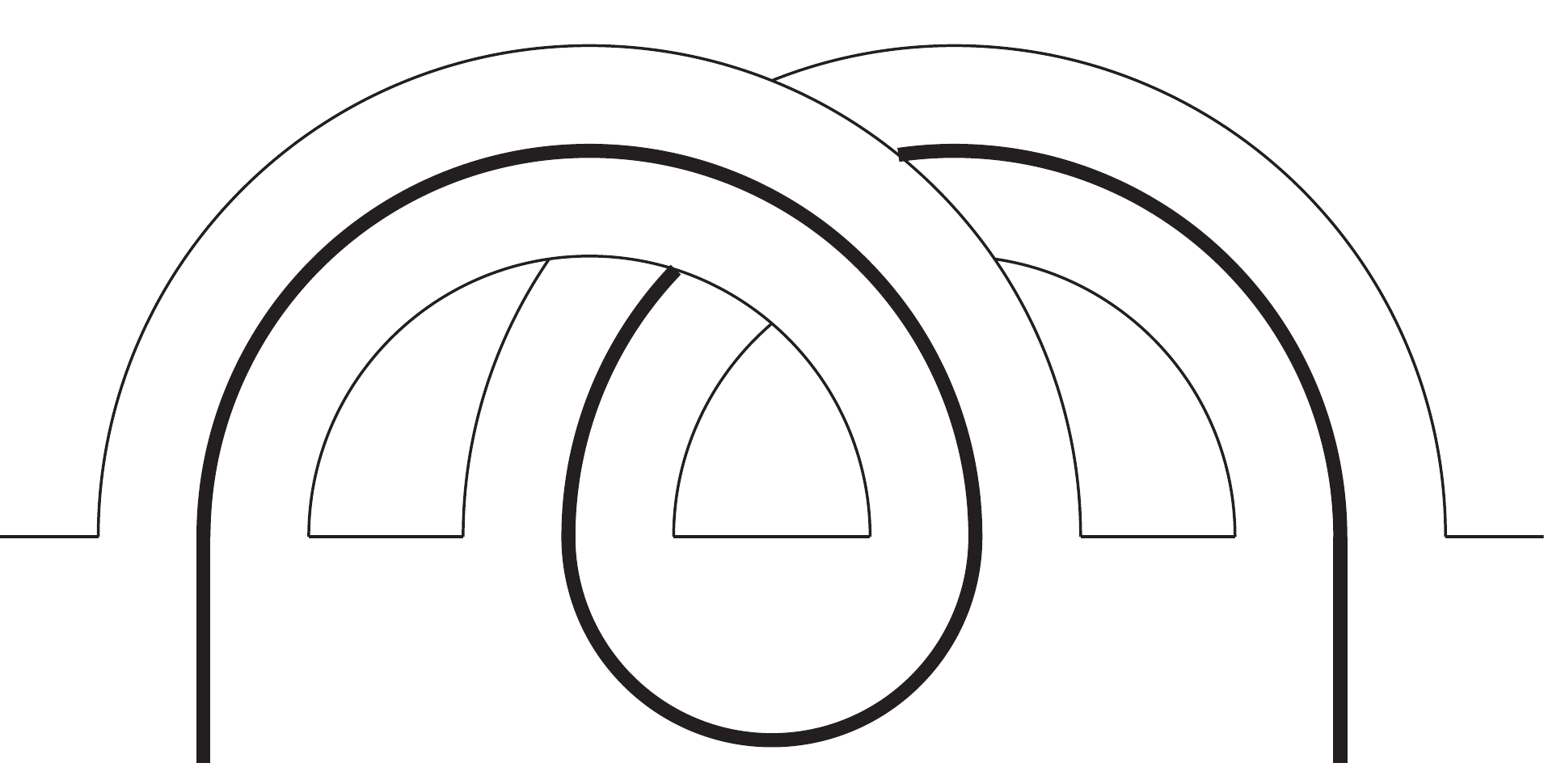}
\caption{A good pair of overlapping handles.}
\label{fig:good}
\end{figure}

Finally, let the \emph{puncture complexity} $r(\gamma)$ be the total number of times $\gamma$ intersects the co-cores of the puncture handles, i.e. $r(\gamma) = | \gamma \cap P|$.

In the following sections, we will show that a generating set for the arc algebra consists of simple curves with $h = 0 $, $b= 0$, and $r = 0 $ or $1$.  
Our proof is by induction on the \emph{total complexity} $|\gamma|$, which is the triple
\[ | \gamma | = (h(\gamma), b(\gamma), r(\gamma)) \in \N^3  \]
where $\N^3$ is ordered lexicographically. 


\subsection{Reducing complexity} \label{subsec:Lemmas}

The first goal is to show that the arc algebra $\mathcal A(F_{g,n}^*)$ is generated by simple curves that pass through each handle at most once.   





\begin{lem} \label{lem:handlec}
$\mathcal A(F_{g,n}^*)$ is generated by simple curves with handle  complexity $h=0$.
\end{lem}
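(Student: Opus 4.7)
The plan is to proceed by induction on the handle complexity $h(\gamma)$, adapting the argument Bullock gives in \cite{Bu99} for the Kauffman bracket skein algebra. The base case $h(\gamma) = 0$ is immediate: such a $\gamma$ already meets every co-core at most once, which is exactly the desired condition.

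For the inductive step, suppose $h(\gamma) \geq 1$. Then some co-core $c \in G \cup P$ satisfies $|\gamma \cap c| \geq 2$. I would choose two intersection points $p_1, p_2 \in \gamma \cap c$ that are adjacent along $c$, and let $\alpha \subset c$ denote the sub-arc between them, whose interior is disjoint from $\gamma$. The two local strands of $\gamma$ through $p_1, p_2$, together with $\alpha$, cobound a small bigon near $c$. The plan is to apply the Kauffman bracket skein relation (1) inside a $3$-ball around this bigon to express $\gamma$ as an $R_n$-linear combination of two diagrams: a \emph{cup-cap} resolution $\gamma_\cup$, in which the two strands are reconnected so that they no longer cross $c$ at $p_1$ or $p_2$, and a \emph{crossed} resolution $\gamma_\times$, in which a crossing between the two strands is introduced. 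Since $|\gamma_\cup \cap c| = |\gamma \cap c| - 2$, we have $h(\gamma_\cup) < h(\gamma)$, and by induction $\gamma_\cup$ lies in the submodule generated by simple curves with $h = 0$.

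The more delicate term is the crossed resolution $\gamma_\times$, which a priori has the same intersection count with $c$ as $\gamma$. My plan here is to follow Bullock's secondary manoeuvre: perform a $3$-dimensional isotopy of $\gamma_\times$ in $F_{g,n}^* \times [0,1]$ that slides the introduced crossing along one of the strands past $\alpha$, and then apply a further skein move together with the framing relation (3) to absorb the resulting kink as a scalar $-A^{\pm 3}$. The net effect rewrites $\gamma_\times$ as a combination of curves with strictly smaller handle complexity, to which the inductive hypothesis applies. When $c \in P$ is a puncture co-core, the puncture-skein relation (2) and puncture-framing relation (4) play the analogous role of (1) and (3) in the vicinity of the puncture.

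The main obstacle is precisely the treatment of $\gamma_\times$: a direct second skein resolution at the introduced crossing merely yields the circular identity $\gamma_\times = A\,\gamma + A^{-1}\,\gamma_\cup$, which recovers nothing. To break the circularity one must combine the skein relation with a genuine $3$-dimensional isotopy and a framing correction, using the ambient handlebody structure of $F_{g,n}^* \times [0,1]$ essentially as Bullock does. I expect the diagrammatic reduction to transfer to the arc algebra with only minor modifications to handle arcs ending at punctures, and to close the induction on $h$.
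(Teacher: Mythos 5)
Your induction on $h$, and your treatment of the co-cores in $G$ as well as of puncture co-cores that $\gamma$ merely passes through, is essentially the paper's argument: in those cases the entire computation takes place in a neighborhood of two adjacent strands of $\gamma$ that avoids all punctures, so Bullock's Lemmas~2 and~3 from \cite{Bu99} apply verbatim, including the resolution of the circularity you correctly identify (the isotopy that introduces a crossing, with the resulting kink absorbed as $-A^{\pm 3}$ by the framing relation). The paper simply cites \cite{Bu99} for all of this.

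The genuine gap is the case you dismiss as ``minor modifications to handle arcs ending at punctures'': a simple arc $\gamma$ meeting the co-core of a puncture handle $H$ at least twice \emph{and having an endpoint at the puncture encircled by $H$}. This is the one case where Bullock's argument breaks, and it is where all the new content of the paper's proof lies. It breaks because the secondary manoeuvre you invoke slides one of the two adjacent strands across the region between them, and when $\gamma$ ends at the puncture inside $H$ that isotopy would have to sweep a strand across the puncture, which is not an isotopy of $F_{g,n}^*$; equivalently, the ball in which the before-and-after diagrams are supposed to differ necessarily contains the puncture. Your proposed substitute --- that relations (2) and (4) ``play the analogous role'' of (1) and (3) near the puncture --- is not a worked mechanism and is not in fact how the paper proceeds. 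Instead, the paper exploits a move with no counterpart in your outline: the terminal segment of the arc can be pivoted around the puncture (legitimate precisely because its endpoint sits \emph{at} the puncture), creating a crossing with the adjacent strand that is then resolved by the ordinary skein relation, with kinks absorbed by relations (3) and (4). Making this work requires a case analysis of how the two strands nearest the puncture are connected outside the local picture --- four configurations plus mirror images (Figure~\ref{fig:Lem21cases}) --- and in two of these configurations the first resolution does \emph{not} yield simple curves: one obtains stacked products $\,\ast\,$ of curves and terms with residual crossings that must be resolved further and back-substituted before every output is seen to have strictly smaller handle complexity. Without carrying out this case, your induction does not close.
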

\begin{proof}
Let $\gamma$ be a simple curve with handle complexity $h(\gamma) > 0$.    Then $\gamma$ intersects the co-core of some handle  $H$ at least twice.   

First consider when $H$ is one of an overlapping pair of handles, or when $H$ is a puncture handle and $\gamma$ does not have an endpoint at that puncture.  Then the  arguments from Lemma~2 and Lemma~3 from \cite{Bu99} directly apply, since the curves involved in the calculation differ only in a neighborhood of two adjacent strands of $\gamma$ in the handle $H$.  In particular, the punctures are outside this neighborhood and play no role, meaning that the proof for the skein algebra is essentially the same as the proof for the arc algebra.  It follows  that $\gamma$ can be written as a linear combination of products of simple knots and arcs with strictly lower handle complexity.  

Thus assume $H$ is a puncture handle and $\gamma$ ends at the puncture at $H$.  Consider the neighborhood of the two strands of $\gamma$ closest to the puncture at $H$.  There are a few cases: the four depicted in Figure~\ref{fig:Lem21cases} and their mirror images.  %
\begin{figure}[htpb]
 \begin{subfigure}[b]{0.9in}
    \pic{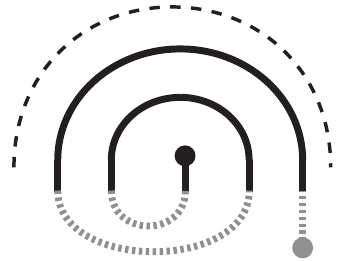}
                \caption{}
                \label{fig:Lem21Case1}
        \end{subfigure}
 \begin{subfigure}[b]{0.9in}
    \pic{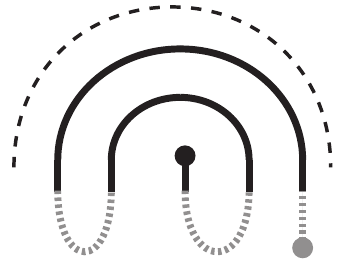}
                \caption{}
                \label{fig:Lem21Case3}
        \end{subfigure}
 \begin{subfigure}[b]{0.9in}
    \pic{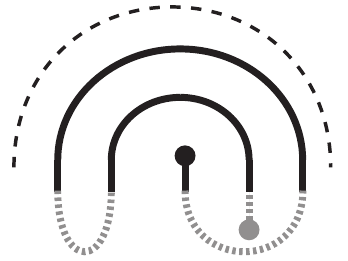}
                \caption{}
                \label{fig:Lem21Case2}
        \end{subfigure}
 \begin{subfigure}[b]{0.9in}
    \pic{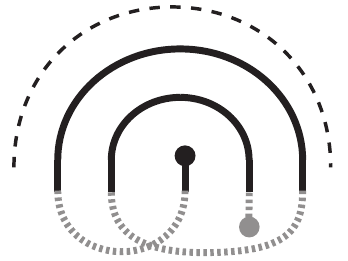}
                \caption{}
                \label{fig:Lem21Case4}
        \end{subfigure}
\caption{Configurations where $\gamma$ is a simple arc ending at the puncture of $H$. }
\label{fig:Lem21cases}
\end{figure}
The two strands of $\gamma$ closest to the puncture are in black, and the dotted grey lines indicate how these two strands are connected outside of this neighborhood.    
In particular, although dotted grey lines might look like they intersect  in Case (D), they in fact do not; $\gamma$ itself does not have any crossings.  
We will only do the cases in Figure~\ref{fig:Lem21cases} and leave their mirror images as an exercise for the reader.  

In Case (A), \
\[ \pic{Lem21Case1.pdf}   = - A^{2} \pic{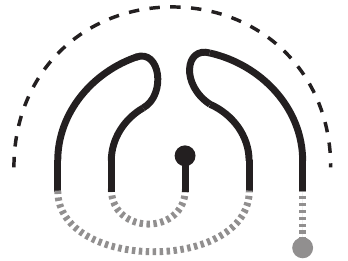} + A \pic{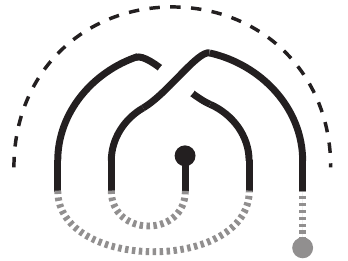}   .\]
The curve in the first term on the right is a simple arc that intersects the co-core of $H$ twice fewer than $\gamma$.    The curve in the second term is a product of a simple knot and a simple arc, both of which intersect the co-core of $H$ once fewer than $\gamma$.     Thus $\gamma$ is written as a linear combination of products of simple curves with strictly lower handle complexity.

We start off Case (B) similarly, 
\[ \pic{Lem21Case3.pdf}   =  - A^{2} \pic{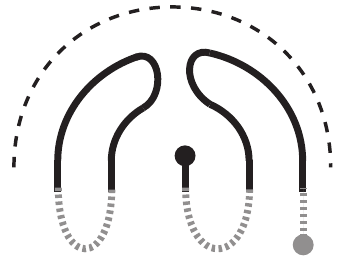} + A \pic{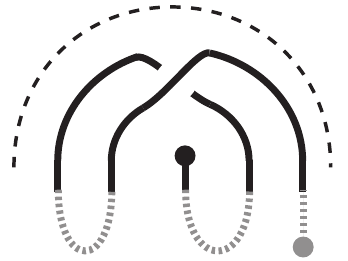} . \]
The curve in the first term is a product of a simple knot and a simple arc, both with handle complexity less than $h(\gamma)$.   The curve in the second term is not itself a simple arc, but
\[ \pic{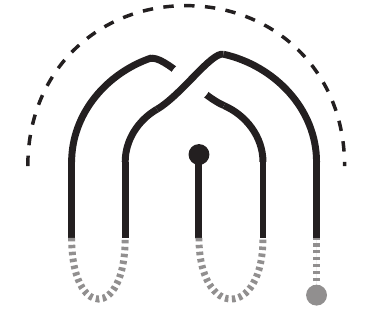} = -A^2\pic{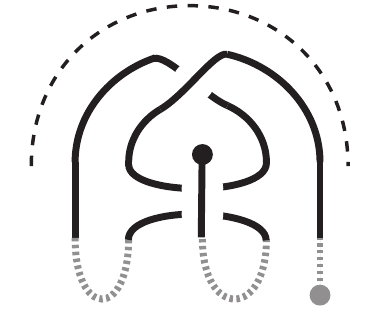} +A\pic{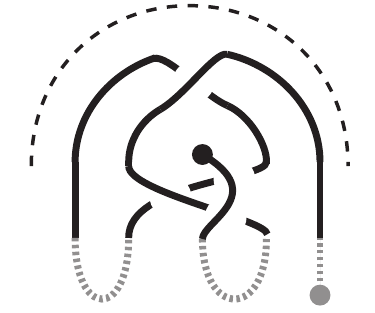}.
\]
Moreover,
\begin{align*}\pic{pzeta3p.pdf} 
&= A \pic{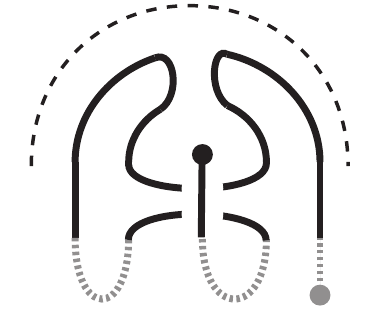} + A^{-1} \pic{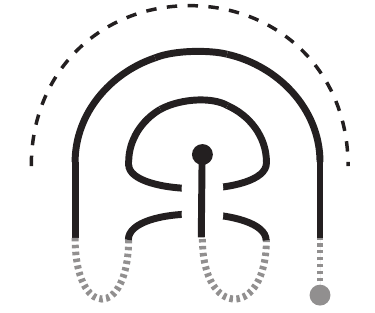}   
\\
&= A \pic{pzeta3pB.pdf} + A^{-1} \pic{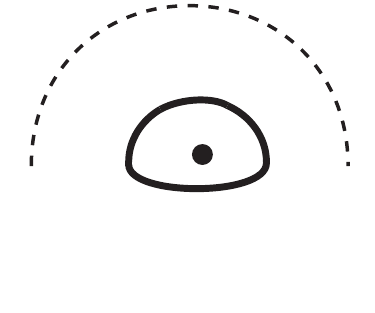}\!\ast\!\pic{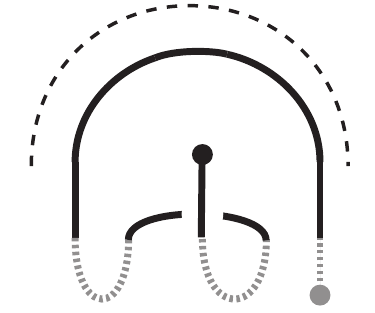},    
\end{align*}
and further applications of the  skein relation at the remaining crossings will show it to be generated by simple curves with lower handle complexity than $\gamma$. 
The other term involves a product,  
\[
\pic{pzeta4p.pdf} = \pic{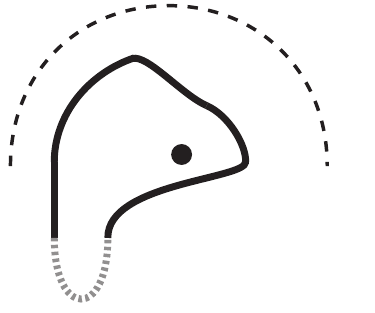}\!\ast\!\pic{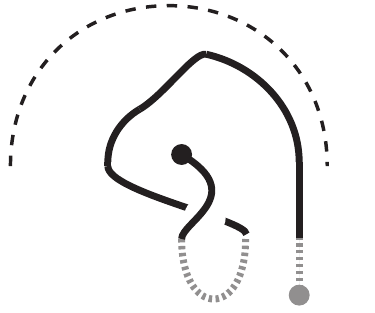}, 
\]
and each of those two components can be generated by simple curves with lower complexity.   By back-substituting, we can write $\gamma$ using curves with handle complexity less than $h(\gamma)$.  

The calculation for the Case (C) is nearly identical to Case (B):
\[\pic{Lem21Case2.pdf}   
=  - A^{2} \pic{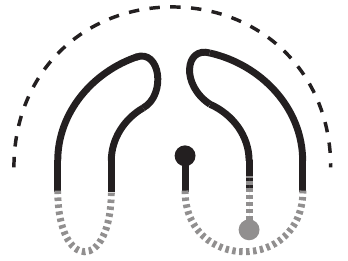} + A \pic{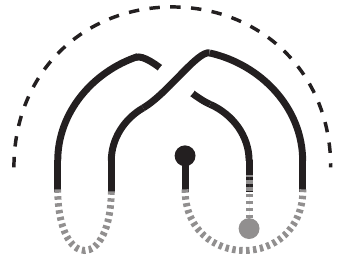}\]
and
\[ \pic{Lem21Case2o.pdf}
=
-A^2 \left( A \pic{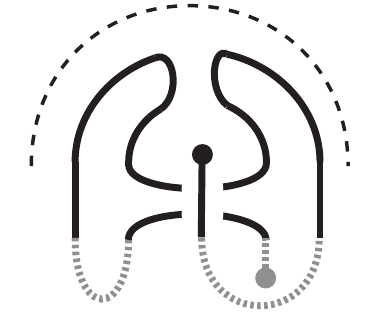} + A^{-1} \pic{pzeta3pAbot.pdf}\!\ast\!\pic{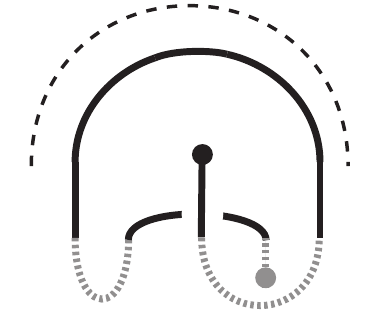} \right) + A \pic{pzeta4pbot.pdf}\!\ast\!\pic{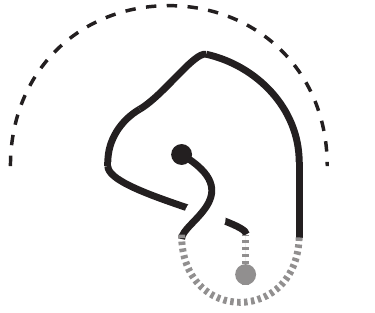}.
\]
Each of the diagrams with crossings can be further simplified by a straightforward application of the skein relation.  The diagrams involved will have lower complexity than $h(\gamma)$.  

Finally, Case (D) is nearly identical to Case (A): \
\[ \pic{Lem21Case4.pdf}   = - A^{2} \pic{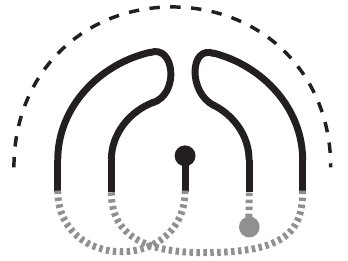} + A \pic{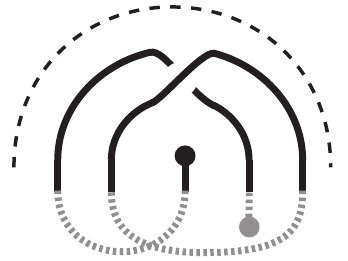}   .\]
The curve in the first term on the right is a simple arc, and the curve in the second term is a product of a simple knot and a simple arc.  Each of those simple curves has strictly lower handle complexity.    

By inductively repeating the above arguments, we see that $\gamma$ can be generated by curves of zero handle complexity.  
 \end{proof}

Although a curve with zero handle complexity passes through each handle at most once, it may still go from one handle to its overlapping pair in a circuitous way.  However, such curves can be generated by curves that traverse both handles in an overlapping pair in the most direct, ``good'' way depicted in Figure \ref{fig:good}.    We cite this result as the next lemma but will omit its proof.  The straightforward calculation from Lemma 1 in  \cite{Bu99} involves diagrams which agree outside a neighborhood of a pair of handles, and thus applies for the arc algebra with very little modification.  

\begin{lem} \label{lem:pairc}
$\mathcal A(F_{g,n}^*)$  is generated by simple curves with both zero handle complexity and zero bad pair complexity.  
\end{lem}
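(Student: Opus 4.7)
The plan is to induct on the bad pair complexity $b(\gamma)$, reducing it without introducing a net increase in handle complexity, and leveraging the fact that the co-cores of an overlapping pair of handles sit in a region of $F_{g,n}^*$ disjoint from all punctures, so that only the ordinary skein relation (relation 1) is needed.

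First, by Lemma~\ref{lem:handlec}, any simple curve $\gamma$ can be written as a linear combination of products of simple curves with $h=0$, so we may assume $h(\gamma)=0$.  If $b(\gamma)=0$ we are done; otherwise choose an overlapping pair of handles that is bad for $\gamma$.  By definition, $\gamma$ traverses both handles of the pair in a configuration other than the one in Figure~\ref{fig:good}.  After an isotopy supported in the union of the two handles, two strands of $\gamma$ inside the pair can be brought together to create a crossing within a small disk contained in the pair.  Applying the skein relation at this crossing writes $\gamma$ as a linear combination of two diagrams, each of which is either a product of simple curves, or a single simple curve in which the offending traversal of the pair has been replaced by the good configuration or removed entirely.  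In every case, the contribution at the bad pair is strictly reduced.

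Because the entire local calculation takes place in a neighborhood that is disjoint from every puncture of $F_{g,n}^*$, the puncture-skein and puncture-framing relations never enter, and the computation is formally identical to the one given in Lemma~1 of Bullock~\cite{Bu99}.  The resulting diagrams may have nonzero handle complexity, but we can re-apply Lemma~\ref{lem:handlec} to push them back down to $h=0$, and then invoke the inductive hypothesis on the lexicographic pair $(h,b)$.

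The main obstacle, and the only point where one must be slightly careful, is to check that the re-reduction of handle complexity via Lemma~\ref{lem:handlec} does not introduce a new bad pair elsewhere.  This follows because every move used in Lemma~\ref{lem:handlec} is supported in a small neighborhood of two parallel strands of the curve inside a single handle, and such a neighborhood can be chosen disjoint from every overlapping pair other than the one containing those two strands.  Hence previously good pairs remain good, the lexicographic complexity $(h(\gamma),b(\gamma))$ strictly decreases, and the induction terminates with a generating set consisting of simple curves with $h=0$ and $b=0$.
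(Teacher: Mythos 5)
Your central observation---that the bad-pair reduction is a purely local skein computation supported in a neighborhood of the two overlapping handles, which is disjoint from all punctures, so that Bullock's Lemma~1 transfers verbatim to the arc algebra---is exactly the paper's proof: the paper omits the computation entirely and cites Lemma~1 of \cite{Bu99} for precisely this reason. So the core of your proposal matches the intended argument.

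However, the termination scaffolding you add contains one step that does not hold as written. You allow the bad-pair resolution to raise handle complexity, re-apply Lemma~\ref{lem:handlec}, and justify termination by asserting that the $h$-reduction ``never creates a new bad pair.'' But your own sentence concedes that the neighborhood of the two strands may meet the overlapping pair containing the handle being simplified, and you give no argument that the reconnected traversals of \emph{that} pair remain good; the conclusion ``previously good pairs remain good'' is a non sequitur for exactly the pair at issue, and with it the claimed strict decrease of $(h(\gamma),b(\gamma))$ is unproven. Fortunately the detour is unnecessary: the crossing you smooth is created by an isotopy supported inside the pair and away from all co-cores, and smoothing a crossing never increases the number of intersections with any co-core, so after re-isotoping to minimal position every resulting diagram satisfies $h \le h(\gamma) = 0$---handle complexity cannot go up in the first place. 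Bullock's computation then shows that each resulting curve either meets the co-cores strictly fewer times or traverses the pair in the good configuration of Figure~\ref{fig:good}, so $b$ strictly drops at constant $h$, and the induction (on $b$ alone, among curves with $h=0$) closes without any appeal to how Lemma~\ref{lem:handlec} interacts with good pairs. With that patch excised, your argument coincides with the paper's.
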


We thus focus on simple curves with both handle complexity $h=0$ and bad pair complexity $b = 0$, i.e. with total complexity $(0,0, r)$.   The next three lemmas will show that such curves are themselves generated by  ones with puncture complexity $r= 0$ or 1.    This is done by ``pulling curves across punctures'' using the puncture-skein relation of the arc algebra, a move that has no counterpart in the skein algebra.  The argument is different for arcs and knots:  
Lemma~\ref{lem:puncturecknots} addresses knots, Lemma~\ref{lem:puncturecarcs} addresses arcs, and
Lemma~\ref{lem:puncturec} then combines the results.

\begin{lem} \label{lem:puncturecknots}
A simple knot with total complexity $(0, 0, r)$ and $r >1$ can be expressed in terms of simple arcs and simple knots with strictly lower total complexity.
\end{lem}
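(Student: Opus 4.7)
My plan is to cut the knot $K$ into two simple arcs of strictly smaller complexity and then to reconstruct $K$ from their product via the puncture-skein relation at the two puncture endpoints. Since $h(K)=0$, the intersection $K \cap H_k$ with any puncture handle is empty or a single strand, and since $r(K) > 1$ there exist at least two distinct punctures $p_i, p_j$ for which $K$ enters both $H_i$ and $H_j$. Choosing $x \in K \cap H_i$ and $y \in K \cap H_j$, cutting $K$ at $\{x,y\}$ and pushing the resulting endpoints along the puncture handles into $p_i$ and $p_j$ produces two simple arcs $\alpha_1, \alpha_2$ from $p_i$ to $p_j$ whose interiors are disjoint in $F_{g,n}^*$ (since $K$ itself is embedded). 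Each $\alpha_k$ inherits $h = b = 0$ from $K$, and $r(\alpha_1) + r(\alpha_2) = r(K) - 2$, so each arc has strictly lower total complexity than $K$.

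Next I would consider the product $\alpha_1 \ast \alpha_2$. Because the two arcs have disjoint interiors, the stacked diagram has no crossings away from the shared endpoints at $p_i, p_j$; near each of these punctures the two arc-ends terminate in parallel at different heights. This local configuration matches one of the two resolutions appearing on the right-hand side of the puncture-skein relation
\[ v_i\,[\text{crossing at }p_i] \;=\; A^{1/2}\,[\text{res}_1] \;+\; A^{-1/2}\,[\text{res}_2]. \]
Using the relation in reverse, the parallel-endpoint configuration at $p_i$ equals $A^{-1/2} v_i$ times a diagram whose arc-ends at $p_i$ are replaced by a local crossing, minus $A^{-1}$ times the opposite-pairing resolution. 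Applying this substitution at both $p_i$ and $p_j$ yields an expansion
\[ \alpha_1 \ast \alpha_2 \;=\; A^{-1} v_i v_j\,[\widetilde K] \;+\; (\text{three mixed-resolution terms}), \]
where $[\widetilde K]$ is the diagram in which both puncture endpoints have been replaced by local crossings. Topologically $[\widetilde K]$ is a closed framed curve that traces $K$ with two small kinks at $p_i$ and $p_j$; removing the kinks via the framing relation identifies $[\widetilde K] = c\,K$ for some unit $c \in R_n$.

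The remaining task is to show that each of the three mixed terms decomposes---after invoking Lemmas~\ref{lem:handlec} and~\ref{lem:pairc} to simplify any incidental handle or bad-pair complexity---as an $R_n$-linear combination of products of simple curves of total complexity strictly less than $(0,0,r)$. The key geometric claim is that the opposite-pairing resolution at $p_i$ recombines $\alpha_1$ and $\alpha_2$ in such a way that the resulting framed curve can be isotoped off the co-core of $H_i$, and symmetrically at $p_j$; each mixed term then has puncture complexity at most $r-1$. Granted this, solving the displayed equation for $K$ (using that $A, v_i, v_j, c$ are units in $R_n$) yields the required decomposition. The main technical obstacle is precisely this verification: one must check by direct picture analysis at each puncture that the alternative pairing of arc-ends neither reintroduces a puncture co-core crossing elsewhere nor raises $h$ or $b$ above what can be eliminated by the earlier lemmas, and that the kink removal in $[\widetilde K]$ produces $K$ rather than some other isotopy class.
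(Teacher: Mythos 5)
Your overall route is the same as the paper's: since $h(\gamma)=0$ and $r(\gamma)>1$ force the knot to meet the puncture handles of two distinct punctures $i \neq j$, cut it there into two simple arcs from $i$ to $j$ (each inheriting $h=b=0$, with the two cut points accounting for a drop of two in puncture complexity), and recover $\gamma$ from $\alpha_1 \ast \alpha_2$ via the puncture-skein relation at both punctures, using that $A^{\pm\frac12}$, $v_i$, $v_j$ are units in $R_n$. However, your bookkeeping of the relation is backwards. In the arc algebra the configuration that carries the factor $v_iv_j$ is the one with arc-ends meeting at the punctures, i.e.\ $\alpha_1\ast\alpha_2$ itself; expanding it at both punctures yields four closed curves, one of which is exactly $\gamma$, with coefficient a power of $A$. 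The paper simply solves for $\gamma$, writing it (in two cases) as $v_iv_jA^{-1}$, respectively $v_iv_j$, times the product of the two simple arcs minus three explicit knots. There is no auxiliary curve $\widetilde K$ with kinks, no unit $c$, and no role for the framing relation---your placement of $v_iv_j$ on the closed-curve side of the identity, and the proposed kink removal (which in any case would change the framing by $-A^{\pm3}$ via relations 1 and 3 combined, not relation 3 alone), signal a misreading of how the puncture-skein relation applies. This part is repairable, but it is not merely cosmetic.

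The genuine gap is the step you explicitly defer: verifying that the three mixed-resolution terms have strictly lower total complexity. Your proposed fallback---invoking Lemmas~\ref{lem:handlec} and~\ref{lem:pairc} to absorb any ``incidental'' handle or bad-pair complexity---cannot work as stated, because total complexity is ordered lexicographically: a term with $h>0$ or $b>0$ has total complexity strictly \emph{greater} than $(0,0,r)$, and Lemmas~\ref{lem:handlec} and~\ref{lem:pairc} are bare generation statements with no control on the puncture complexity of their output, so after such a cleanup you cannot conclude the strict decrease that this lemma, and the induction behind Proposition~\ref{lem:puncturec}, require. The paper closes exactly this point by direct picture analysis: it splits into two cases according to how the two strands of $\gamma$ sit relative to the punctures (a case division absent from your proposal, and the source of the two different coefficient patterns), and exhibits all four terms explicitly, observing that each resolution differs from $\gamma$ only inside the two puncture handles, hence is visibly crossingless with $h=b=0$ unchanged and with $r$ reduced by one or two; in particular each mixed term is a simple knot, or product of simple curves, of strictly smaller total complexity. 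Your ``key geometric claim'' that the opposite pairing can be isotoped off the relevant co-core is precisely what those diagrams verify; without carrying out that verification the argument is incomplete.
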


\begin{proof}
Let $\gamma$ be a simple knot with $r(\gamma) > 1$.  Because $h(\gamma)= 0$, $\gamma$ must pass through at least two distinct puncture handles, say those corresponding to puncture $i$ and puncture $j$.  Like we did before, we keep track of how the strands of $\gamma$ are connected outside of these two puncture handles with the use of dotted grey lines.     

In the first case, 
\begin{align*}
\gamma &= \begin{minipage}{1.5in}\includegraphics[width=0.90\textwidth]{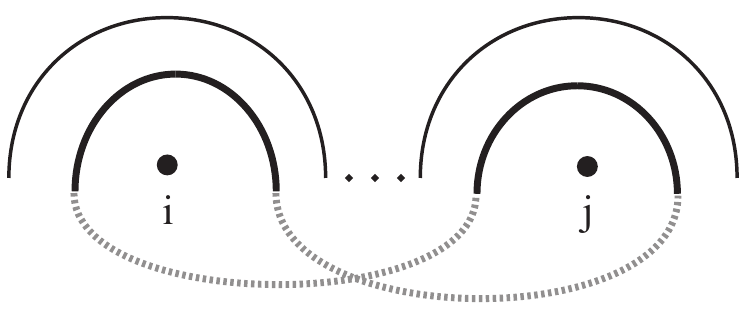}\end{minipage} \\
&= v_{i} v_{j} A^{-1} \begin{minipage}{1.5in}\includegraphics[width=\textwidth]{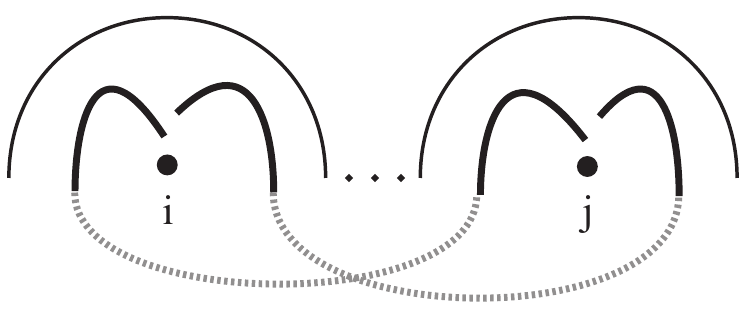}\end{minipage} 
- A^{-1} \; \begin{minipage}{1.5in}\includegraphics[width=0.90\textwidth]{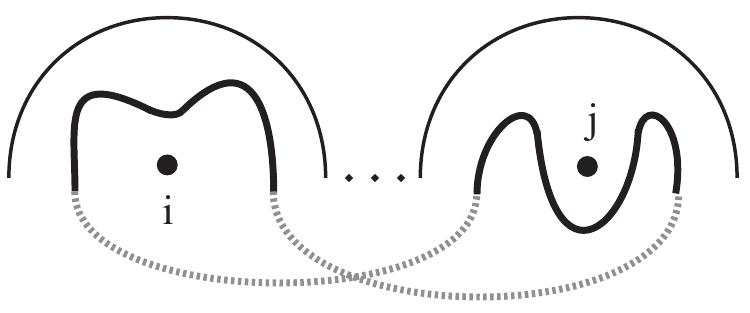}\end{minipage}  \\
& \qquad \; - A^{-1} \; \begin{minipage}{1.5in}\includegraphics[width=0.90\textwidth]{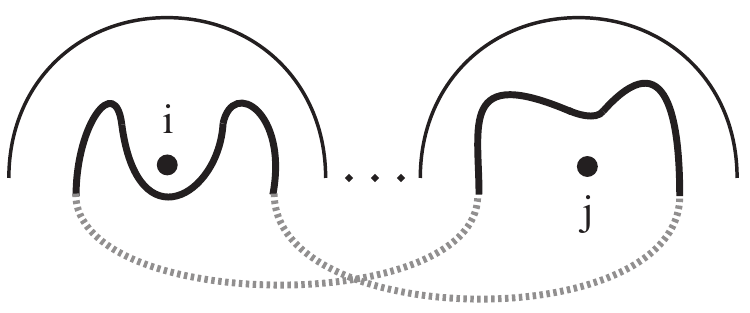}\end{minipage}  
-A^{-2} \begin{minipage}{1.5in}\includegraphics[width=0.90\textwidth]{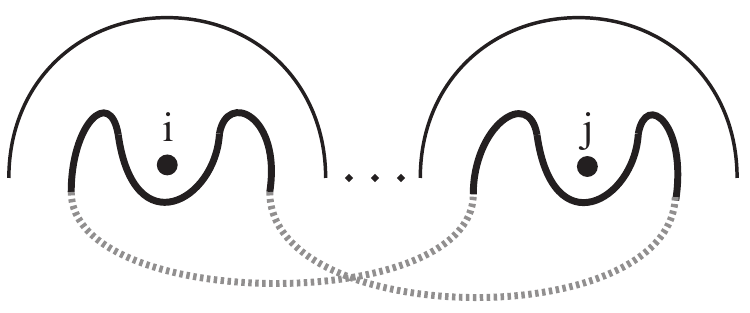}\end{minipage} .
\end{align*}
In the second case, 
\begin{align*}
\gamma &= \begin{minipage}{1.5in}\includegraphics[width=0.90\textwidth]{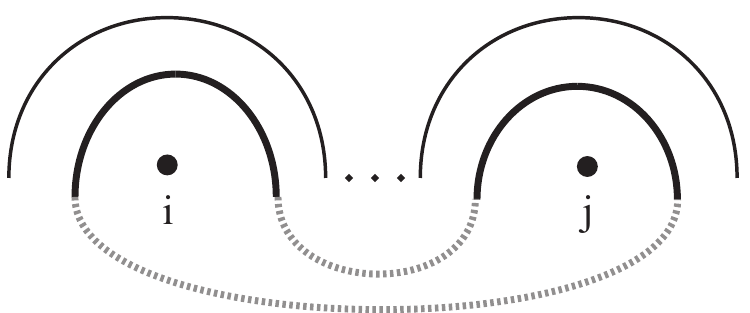}\end{minipage} \\
&= v_{i} v_{j}\begin{minipage}{1.5in}\includegraphics[width=\textwidth]{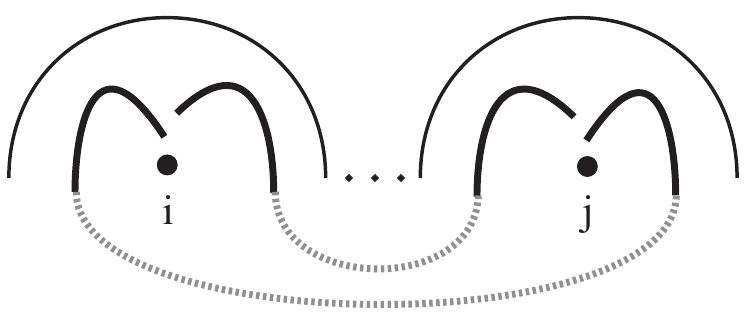}\end{minipage} 
- A \; \begin{minipage}{1.5in}\includegraphics[width=0.90\textwidth]{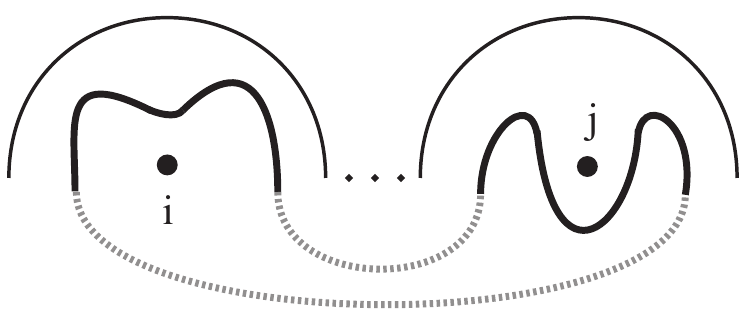}\end{minipage}\\
& \qquad \; \; - \begin{minipage}{1.5in}\includegraphics[width=0.90\textwidth]{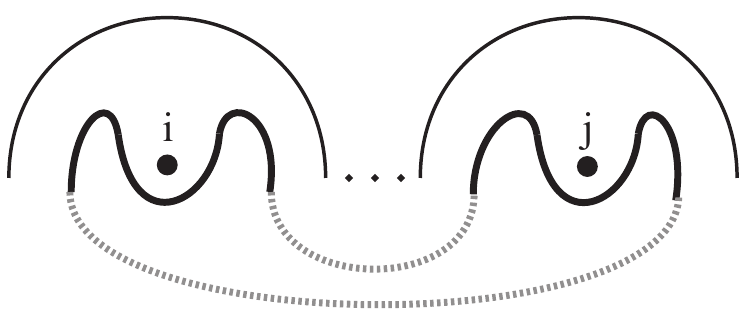}\end{minipage}  
-A^{-1} \begin{minipage}{1.5in}\includegraphics[width=0.90\textwidth]{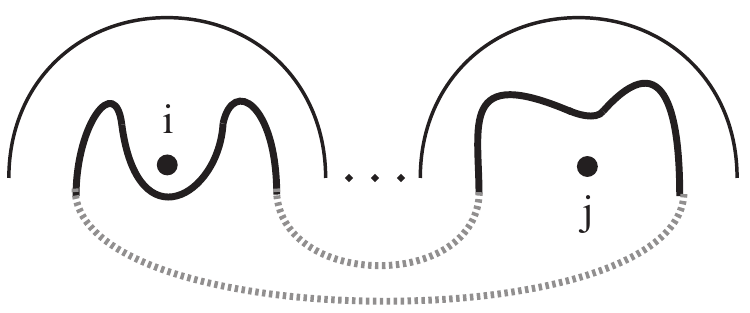}\end{minipage} .
\end{align*}
In both cases we have written $\gamma$ using simple curves and a product of two simple arcs, each with smaller puncture complexity $r$.   
\end{proof}

\begin{lem} \label{lem:puncturecarcs}
A simple arc with total complexity $(0, 0, r)$ and $r>0$ can be expressed in terms of simple arcs with strictly lower total complexity and simple knots with equal or lower total complexity.  
\end{lem}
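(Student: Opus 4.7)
The plan is to adapt the argument of Lemma~\ref{lem:puncturecknots} to the case of simple arcs. Let $\gamma$ be a simple arc with $|\gamma| = (0,0,r)$ and $r > 0$. The goal is to express $\gamma$ as a linear combination of products of simple arcs of strictly lower total complexity and simple knots of total complexity at most $|\gamma|$.

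First, since $r > 0$, the curve $\gamma$ intersects the co-core of some puncture handle $P_k$, and since $\gamma$ is a simple arc, it has two endpoints at punctures. I would identify a puncture $k$ near which $\gamma$ presents two nearby strands to which the puncture-skein relation can be applied. This may be a puncture through which $\gamma$ passes, or a pair of punctures (one at an endpoint of $\gamma$ and one at a pass-through), used in tandem as in the two-puncture application of Lemma~\ref{lem:puncturecknots}.

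Next, I would apply the puncture-skein relation at $k$ (and possibly at a second puncture), producing an expression of the form
\[ [\gamma] = v_k^{\pm 1}\bigl( A^{\frac{1}{2}}\, [\gamma_1] + A^{-\frac{1}{2}}\, [\gamma_2] \bigr) + (\text{correction terms}), \]
where $\gamma_1, \gamma_2$ have the strands of $\gamma$ near $k$ resolved into arcs ending at $k$. These resolutions strictly reduce the number of intersections of $\gamma$ with $P_k$, and the crossings introduced in the correction terms can be resolved via the skein relation and the framing/puncture-framing relations, just as in the case analysis of Lemma~\ref{lem:handlec}. A separate case analysis on the geometric configuration of the strands of $\gamma$ near $k$ (analogous to Cases (A)--(D) of Lemma~\ref{lem:handlec}) will be needed to cover all possibilities, especially the distinction between strands that pass through $P_k$ and strands that terminate at $k$ as endpoints of $\gamma$.

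The principal technical obstacle is bookkeeping: one must verify in each configuration that the puncture-skein resolutions do not increase $h$ or $b$ for any produced simple arc, and that every simple arc appearing in the final expansion has strictly smaller total complexity than $\gamma$ in lex order on $(h,b,r)$. The weaker hypothesis allowed on simple knots---``equal or lower'' rather than ``strictly lower''---accommodates the case in which a resolution closes off a portion of $\gamma$ into a simple knot with the same puncture complexity as $\gamma$. Confirming that no such closing-off increases $h$ or $b$, and that every produced simple arc strictly decreases the total complexity, is where I expect the main effort to lie.
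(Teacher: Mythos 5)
Your plan correctly reproduces the easy half of the paper's argument: when puncture $i$ is not an endpoint of $\gamma$, solve the puncture-skein relation for the strand passing through the puncture handle, obtaining $v_iA^{-1/2}$ times a product of two simple arcs (each meeting strictly fewer puncture co-cores) minus $A^{-1}$ times the strand pushed to the other side of the puncture. But in the case you correctly flag as delicate --- when $\gamma$ has an endpoint at puncture $i$ --- your proposal has a genuine gap. Splitting the through-going strand at $i$ produces a product of two arcs, one of which has \emph{both} endpoints at puncture $i$. That object is not a simple arc (simple arcs join two \emph{distinct} punctures), so it falls outside your inductive bookkeeping, and it cannot be reduced ``via the skein relation and the framing/puncture-framing relations, just as in Lemma~\ref{lem:handlec}'': those relations act only on crossings and trivial loops and cannot detach an arc endpoint from a puncture. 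The paper's mechanism is a \emph{second, inverted} application of the puncture-skein relation: the $i$-to-$i$ arc is rewritten as $\frac{1}{v_i}\bigl(A^{1/2}(\,\cdot\,)+A^{-1/2}(\,\cdot\,)\bigr)$, using invertibility of $v_i$ in $R_n$, which resolves its two ends at the puncture into closed loops. One of the two resulting knots passes back through the puncture handle and so has the \emph{same} puncture complexity as $\gamma$ --- this, not a generic ``closing off,'' is the precise source of the ``equal or lower'' allowance for knots in the statement. You anticipated the phenomenon but not the mechanism, and without this second puncture-skein application the endpoint case does not close.

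Two smaller points. The ``two punctures in tandem'' move you import from Lemma~\ref{lem:puncturecknots} is neither needed nor generally available here: a simple arc with $h=0$ may meet only a single puncture co-core (e.g.\ $r=1$), whereas a knot with $h=0$ and $r>1$ necessarily meets two distinct ones; the paper applies the relation at a single puncture in every case. Also, the first resolution in the endpoint case is a \emph{product} of arcs stacked at the $i$th puncture, and the relative heights of the pieces at that puncture determine the correct product decomposition before the second puncture-skein application --- a detail your displayed master formula elides. Your treatment of the pushed-off correction term is fine: it has a single crossing, one ordinary skein resolution handles it, and all resulting terms have strictly lower puncture complexity, as in the paper.
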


\begin{proof}
Suppose $r(\gamma) > 0$, so that $\gamma$ must intersect the co-core of at least one puncture handle, say that of puncture $i$.
Again, we split into a few cases.

In the first case, puncture $i$ is not an endpoint of $\gamma$.
Apply the puncture-skein relation, so
\[
\gamma = \pic{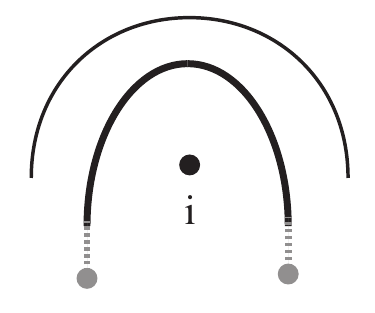} = v_i A^{-1/2}\; \pic{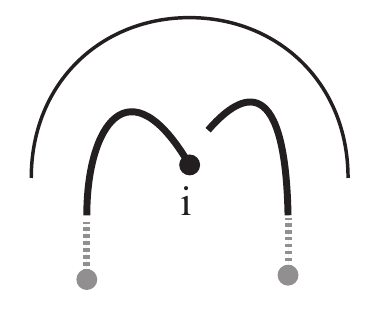} - A^{-1} \; \pic{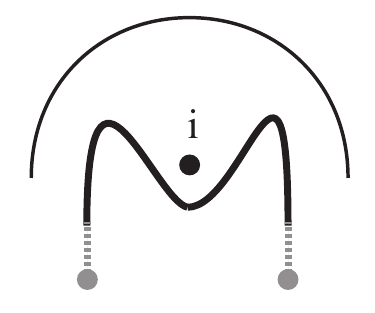}.
\]
Note that the first term on the right-hand side is a product of simple arcs, as the requirement that puncture $i$ is not an endpoint of $\gamma$ ensures that we have split the arc into two pieces, each of which intersect at least one fewer puncture handle co-core than $\gamma$ does.    The last term on the right is another arc that intersects at least one fewer puncture handle than $\gamma$.  

In the remaining cases, puncture $i$ is an endpoint of $\gamma$.
We proceed similarly, but with more caution.
Either the strand on the right or the strand on the left ends at puncture $i$. 

If the strand on the right ends at puncture $i$, we apply the puncture-skein relation to derive
\[
\pic{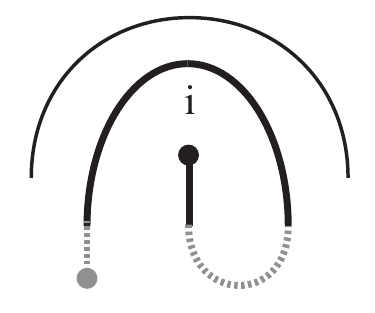} =v_iA^{-1/2}\; \pic{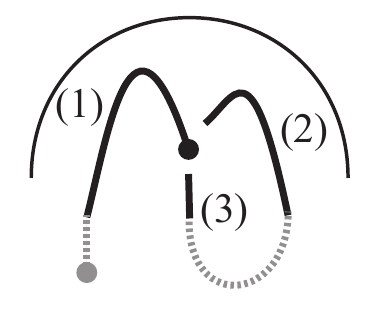} - A^{-1} \; \pic{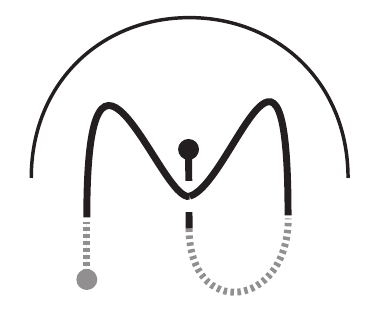}.  
\]
The numbering of the arcs on the first term on the right side of the equation indicates the relative heights at the $i$th puncture, so that $(2)$ is above $(3)$.   
It is easy to see that the second term on the right can be expressed using simple knots and arcs with strictly lower puncture complexity, by applying the skein relation on the single crossing.  The first term on the right is a product of arcs, and we apply the puncture-skein relation.  Specifically,  
\begin{align*}
\pic{puncarciarcs.pdf} &= \pic{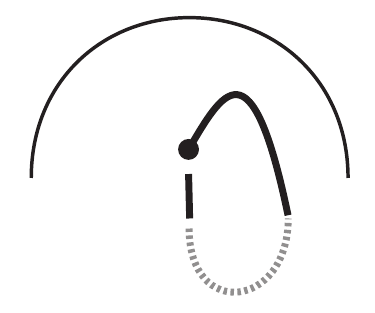} \ast \pic{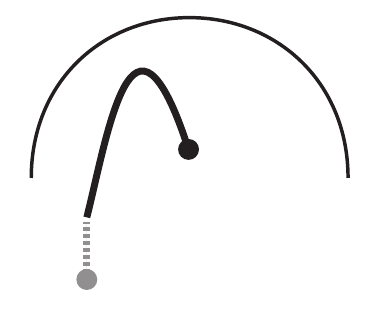}
\\
& =\frac{1}{v_i}\left(A^{\frac{1}{2}}\pic{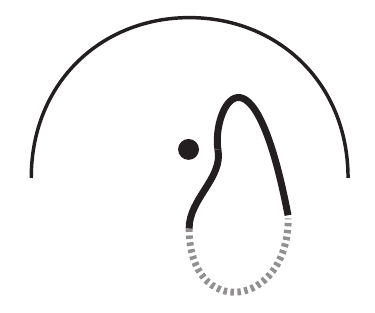} + A^{-\frac{1}{2}}\pic{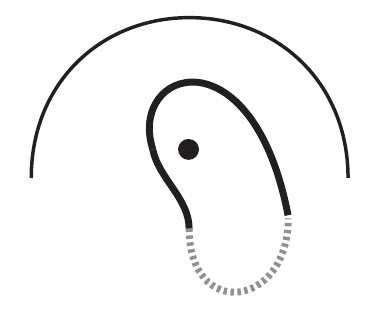}\right) \ast  \pic{puncarciarcstop.pdf}.
\end{align*}
All of the terms involve simple arcs with strictly lower complexity or simple knots with equal or lower complexity.  

If the strand of $\gamma$ on the left side of the puncture handle ends at puncture $i$, then we begin with
\[
\pic{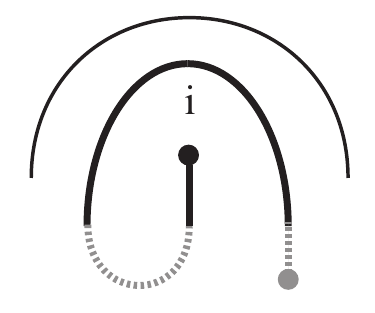} =v_iA^{1/2}\; \pic{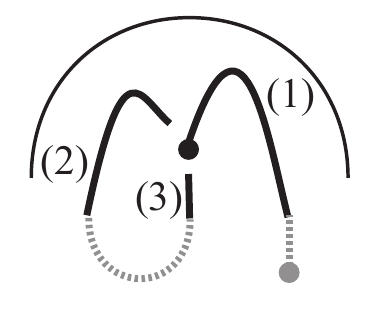} - A \; \pic{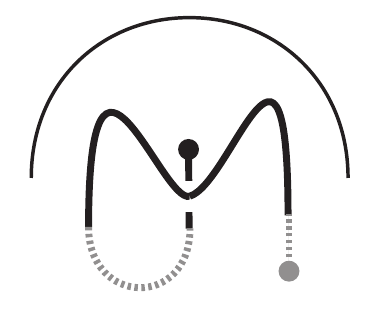}  
\]
and proceed similarly as in the previous case.

\end{proof}

\begin{prop} \label{lem:puncturec}
$\mathcal A(F_{g,n}^*)$  is generated by simple arcs with total complexity (0,0,0) and by simple knots with total complexity (0,0,0) or (0,0,1).  
\end{prop}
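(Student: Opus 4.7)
The plan is to combine Lemmas~\ref{lem:pairc}, \ref{lem:puncturecknots}, and \ref{lem:puncturecarcs} through a lexicographic induction on total complexity. By Lemma~\ref{lem:pairc}, every element of $\mathcal A(F_{g,n}^*)$ is already a combination of simple curves of total complexity $(0, 0, r)$ for various $r \geq 0$, so it suffices to show that each such simple curve can be re-expressed in terms of the proposed generating set, namely simple arcs of complexity $(0,0,0)$ and simple knots of complexity $(0,0,0)$ or $(0,0,1)$.

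First I would record the following book-keeping observation: when either Lemma~\ref{lem:puncturecknots} or Lemma~\ref{lem:puncturecarcs} is applied to a simple curve of complexity $(0,0,r)$, any output of strictly lower total complexity necessarily has the form $(0, 0, r')$ with $r' < r$, since one cannot decrease the first or second coordinate below zero, and in lex order decreasing the third coordinate is the only remaining possibility. Likewise, the equal-or-lower-complexity knots produced by Lemma~\ref{lem:puncturecarcs} have complexity $(0, 0, r'')$ with $r'' \leq r$. Thus these lemmas always output simple curves that still live in the $h = 0$, $b = 0$ regime, and puncture complexity is the only coordinate that needs to be tracked.

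I would then induct on $r$. The base cases $r = 0$ (for both arcs and knots) and $r = 1$ (for knots) are exactly the proposed generators, so nothing needs to be done. For the inductive step, fix $r$ and assume the conclusion holds for all simple curves of total complexity $(0,0,r')$ with $r' < r$. If $\gamma$ is a simple knot with $r \geq 2$, Lemma~\ref{lem:puncturecknots} writes it as a combination of simple arcs and knots of strictly lower total complexity, and by the inductive hypothesis each such factor is expressible in the desired generators. If $\gamma$ is a simple arc with $r \geq 1$, Lemma~\ref{lem:puncturecarcs} expresses it in terms of simple arcs of strictly lower complexity (handled by induction) together with simple knots of complexity at most $(0,0,r)$. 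Any such knot with $r'' \leq 1$ is already a generator; any such knot with $r'' \geq 2$ can be fed through Lemma~\ref{lem:puncturecknots} to produce curves of complexity strictly below $(0,0,r'')$, hence strictly below $(0,0,r)$, so these too are handled by the inductive hypothesis.

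The only point that needs verifying is that the chain ``arc lemma, then knot lemma'' does not loop. Since the knot lemma always strictly lowers $r$, no matter how many times we alternate between the two lemmas the puncture complexity strictly descends, and the induction terminates. There is no deeper obstacle at this stage; the real work was done in the three preceding lemmas.
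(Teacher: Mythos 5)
Your proposal is correct and is precisely the argument the paper has in mind: the paper's proof of Proposition~\ref{lem:puncturec} is literally the one-line remark that it ``follows from putting together Lemmas~\ref{lem:handlec} and \ref{lem:pairc} and a straightforward inductive argument using Lemmas~\ref{lem:puncturecarcs} and \ref{lem:puncturecknots},'' and your write-up supplies exactly that induction, including the correct observation that lexicographic descent from $(0,0,r)$ forces outputs of the form $(0,0,r')$ and the careful handling of the equal-complexity knots produced by the arc lemma.
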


Proposition \ref{lem:puncturec} follows from putting together Lemmas \ref{lem:handlec} and \ref{lem:pairc} and a straightforward inductive argument using Lemmas ~\ref{lem:puncturecarcs} and \ref{lem:puncturecknots}.

We next show that the curves in Proposition \ref{lem:puncturec} are characterized by the handles they intersect and their endpoints.    The argument is very similar to Bullock's argument in \cite{Bu99}, so we provide only an abbreviated proof here.

\begin{lem} \label{lem:determinehandle}
A simple curve from Proposition \ref{lem:puncturec} is completely determined by the handles it intersects and, if the curve is an arc, its endpoints.  
\end{lem}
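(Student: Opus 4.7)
I would follow Bullock's strategy from \cite{Bu99}, adapting it to handle both arcs and the possible puncture-handle crossing. The idea is to cut the surface $F_{g,n}^*$ along all the co-cores in $G \cup P$, which produces the $0$-handle disk $D_0$ together with pieces of the $1$-handles (rectangles for overlapping handles, and disks-with-a-puncture for puncture handles). A simple curve $\gamma$ of total complexity $(0,0,r)$ with $r \in \{0,1\}$ crosses each co-core at most once, so in each $1$-handle piece $\gamma$ restricts to at most one arc, while in $D_0$ it restricts to a disjoint, non-crossing family of properly embedded arcs with endpoints on $\partial D_0$.

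The first step is to observe that the restriction of $\gamma$ to each $1$-handle piece is determined up to isotopy by the combinatorial data. Inside a pair of overlapping handles, the condition $b(\gamma)=0$ forces the unique local picture of Figure \ref{fig:good}. Inside a puncture handle either $\gamma$ terminates at the puncture (for an arc with an endpoint there) or passes once through (contributing to $r(\gamma)$), and in each case the local piece is determined by the attaching arc of $\partial D_0$ through which $\gamma$ enters. Thus every $1$-handle piece is pinned down by the hypothesis.

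The second step is to show that the restriction of $\gamma$ to $D_0$ is itself determined. Each endpoint on $\partial D_0$ lies on the attaching arc of one of the $1$-handles that $\gamma$ intersects, so both the multiset of endpoints and their cyclic order on $\partial D_0$ are read off directly from the handle data. Because $D_0$ is a disk, any two properly embedded collections of disjoint non-crossing arcs realizing the same matching of boundary points are ambient-isotopic in $D_0$. Once the matching is known, the isotopy extends over the $1$-handle pieces by the uniqueness in the first step.

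The main obstacle is establishing the matching itself: a set of $2k$ points on the boundary of a disk admits many non-crossing pairings in general, so one must check that only one is consistent with the prescribed data. Here the rigidity of the local pieces wins: starting at any endpoint and following $\gamma$ through the attached $1$-handle piece deposits us at a specific other endpoint on $\partial D_0$, and iterating this rule prescribes a unique cyclic sequence that the arcs in $D_0$ must close up. For an arc of $\gamma$, the endpoint punctures anchor the ends of the sequence; for a knot, the sequence is a single cycle whose non-crossing realization in $D_0$ is unique. This is exactly the extension of Bullock's combinatorial argument from the skein setting to the arc setting, and it is the step I would write out most carefully, though it suffices to indicate it briefly as the authors do.
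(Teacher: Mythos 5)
Your proposal is correct and follows essentially the same route as the paper: the authors also reduce to a disk (their $\tilde D$, the $0$-handle trimmed near the overlapping pairs so that a good-pair passage counts as a single exit, which plays the role of your cut along the co-cores together with your appeal to $b(\gamma)=0$ for the local picture in overlapping handles), classify the possible exits, and conclude that connectedness plus the absence of crossings leaves a unique way to match the points of $\gamma \cap \partial\tilde D$ inside the disk. Your elaboration of the uniqueness of the non-crossing connected matching, and your anchoring of the arc case at its puncture endpoints, are exactly the details the paper's abbreviated proof compresses into its final sentence.
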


\begin{proof}
Suppose $\gamma$ has $h(\gamma) = b(\gamma) = 0$ and, if it is an arc, $r(\gamma) = 0$.  There exists a disk $\tilde D$ in $F_{g,n}^*$ (essentially its 0-handle minus small neighborhoods near the overlapping pairs of handles) such that whenever $\gamma$ exits $\tilde D$, it either passes through a puncture handle once, passes through a single handle of an overlapping pair once, passes through both handles of an overlapping pair in the good way described in Figure~\ref{fig:good}, or, if it is an arc, goes to a puncture.  Moreover, if $\gamma$ is an arc, $r(\gamma) = 0$ implies that the points of $\gamma \cap \partial \tilde D$ closest to the two endpoints of $\gamma$ are adjacent on $\partial \tilde D$.    Thus, inside $\tilde D$, there is only one way to connect up the points of $\gamma \cap \partial \tilde D$ so that $\gamma$ is a connected curve without any crossings.  
\end{proof}

\subsection{A generating set for $\mathcal A(F_{g,n})$}  \label{sec:proof}
So far we have only considered curves on $F_{g,n}^*$.  Since $F_{g,n}^*$ was obtained from $F_{g,n}$ by removing only a small disk $D$, the set of generating curves  for $\mathcal A(F_{g,n}^*)$ described in Proposition \ref{lem:puncturec} will also generate $\mathcal A(F_{g,n})$.   However, there are redundancies in that set, because there are curves which are isotopic in $F_{g,n}$ but not in $F_{g,n}^*$.

In particular, let $F_{g,n-1}^*$ be the subsurface of $F_{g,n}$ obtained by further removing the $n$th puncture and the $n$th puncture handle from $F_{g,n}^*$.  We choose the $n$th one for convenience; the following argument applies for any of the puncture handles.   Now consider a curve  $\gamma$ that passes through the $n$th puncture handle of $F_{g,n}^*$.   We can take the strands of $\gamma$ that pass through the $n$th puncture handle, push it out the part of $\partial F_{g,n}^*$ near the $n$th puncture handle, across the disk $D$, and then back through the part of $\partial F_{g,n}^*$ away from the $n$th puncture handle.  An example of such a move is depicted in Figure~\ref{fig:avoid}.  We see that $\gamma$ now avoids the $n$th puncture handle and is a curve in $F_{g, n-1}^*$.  

\begin{figure}[h]
\centering
\includegraphics[width=15cm]{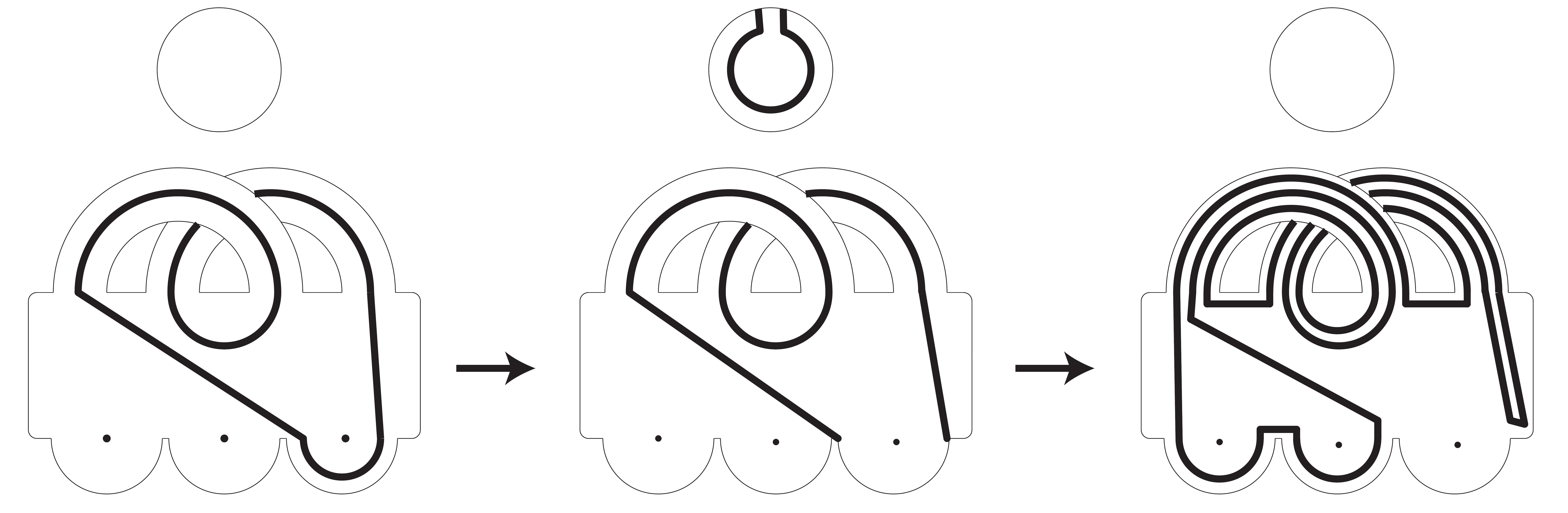}
\caption{Pushing a curve across the boundary of $F_{1,3}^*$. The disk $D$ is shown above the corresponding $F_{1,3}^*$.}
\label{fig:avoid}
\end{figure}

Let us apply the above principle to the generators from Proposition \ref{lem:puncturec} which are simple knots in $F_{g, n}^*$ with total complexity $(0,0,1)$.  We may now regard them as simple knots in $F_{g, n-1}^*$, possibly with higher total complexity in $F_{g, n-1}^*$.    The results from Section \ref{subsec:Lemmas} can be used to express such knots in terms of simple arcs with total complexity (0,0,0) and simple knots with total complexity (0,0,0) and (0,0,1), where complexity is now measured in $F_{g, n-1}^*$.    Since simple arcs and simple knots with total complexity (0,0,0) in $F_{g,n-1}^*$ also have total complexity (0,0,0) in $F_{g,n}^*$, we have thus proven the following theorem.

\begin{thm} \label{thm:simplegenerators}
The arc algebra $\mathcal A(F_{g,n})$ is generated by simple arcs and simple knots which lie in $F_{g,n}^*$ with total complexity (0,0,0); and by simple knots which lie in $F_{g,n-1}^*$ with total complexity (0,0,1). 
\end{thm}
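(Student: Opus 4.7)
The plan is to use Proposition \ref{lem:puncturec} as the starting point and eliminate the ``extra'' generators (simple knots of complexity $(0,0,1)$ in $F_{g,n}^*$) by pushing them into the sub-surface $F_{g,n-1}^*$. Since any generating set for $\mathcal A(F_{g,n}^*)$ automatically generates $\mathcal A(F_{g,n})$, Proposition \ref{lem:puncturec} immediately yields the first half of the theorem: simple arcs of total complexity $(0,0,0)$ in $F_{g,n}^*$ and simple knots of total complexity $(0,0,0)$ in $F_{g,n}^*$ already lie in the proposed generating set. The only remaining task is to dispose of the simple knots of total complexity $(0,0,1)$ in $F_{g,n}^*$.

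Fix such a knot $\gamma$. By definition it passes through exactly one puncture handle; after relabeling, assume that it is the $n$th. The key geometric move is the ``pushing across $D$'' isotopy illustrated in Figure \ref{fig:avoid}. Because $\gamma$ meets the $n$th puncture handle along a single strand (recall $h(\gamma) = 0$), I would slide that strand out of $F_{g,n}^*$ near the $n$th puncture handle, across the removed disk $D$, and back in along a portion of $\partial F_{g,n}^*$ away from the $n$th puncture handle. This is a genuine isotopy in $F_{g,n} \times [0,1]$, so the resulting knot $\gamma'$ represents the same element of $\mathcal A(F_{g,n})$ as $\gamma$, but now $\gamma'$ lies inside the smaller sub-surface $F_{g,n-1}^* \subset F_{g,n}^*$.

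Next I would apply Proposition \ref{lem:puncturec} a second time, this time to the arc algebra $\mathcal A(F_{g,n-1}^*)$ in which $\gamma'$ naturally sits. This rewrites $\gamma'$ as a linear combination of products of simple arcs of complexity $(0,0,0)$ in $F_{g,n-1}^*$ and simple knots of complexity $(0,0,0)$ or $(0,0,1)$ in $F_{g,n-1}^*$. To finish, observe that the chosen handle decomposition of $F_{g,n-1}^*$ is just that of $F_{g,n}^*$ with the $n$th puncture handle omitted, so any curve supported in $F_{g,n-1}^*$ has the same values of $h$, $b$, and $r$ in either surface---it simply contributes nothing at the missing puncture handle. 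Consequently, the simple arcs and knots of complexity $(0,0,0)$ that appear in the expansion of $\gamma'$ are already covered by the first half of the theorem, while the simple knots of complexity $(0,0,1)$ in $F_{g,n-1}^*$ constitute exactly the curves described in the second half.

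The main subtlety I anticipate is keeping the complexity bookkeeping honest when one passes between $F_{g,n}^*$ and $F_{g,n-1}^*$: one must verify that the inclusion $F_{g,n-1}^* \hookrightarrow F_{g,n}^*$ preserves the three complexity functions for curves that avoid the $n$th puncture handle, and that the push across $D$ does not inadvertently create new intersections with other co-cores. Both points are essentially built into the construction of the handle decomposition in Section \ref{subsec:Def} and the picture in Figure \ref{fig:avoid}, so once they are recorded, the proof is a short assembly of previously established results and requires no further skein-theoretic computation.
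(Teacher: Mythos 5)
Your proposal is correct and follows essentially the same route as the paper: push the single strand through the $n$th puncture handle across the removed disk $D$ (Figure~\ref{fig:avoid}) so the knot lies in $F_{g,n-1}^*$, re-run the complexity reduction of Proposition~\ref{lem:puncturec} inside $F_{g,n-1}^*$ (noting, as the paper does, that the pushed knot may have higher total complexity there), and observe that curves of total complexity $(0,0,0)$ in $F_{g,n-1}^*$ still have total complexity $(0,0,0)$ in $F_{g,n}^*$. The one wording to repair is the per-knot ``relabeling'': the subsurface $F_{g,n-1}^*$ must be fixed once and for all (otherwise the target surface varies from knot to knot and the generating set in the theorem is not well defined), so instead note that $(0,0,1)$ knots through any of the first $n-1$ puncture handles already lie in the fixed $F_{g,n-1}^*$ with complexity $(0,0,1)$ and need no pushing, while only the knots through the $n$th handle are pushed across $D$.
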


Recall from Proposition \ref{prop:n01} that $\mathcal A(F_{g,n})$ is generated by $4^{g}-1$ knots when $n = 0, 1$.  We obtain a count for $n>1$ from Theorem \ref{thm:simplegenerators}.


\begin{cor} \label{thm:finitegen}
For $n > 1$ the arc algebra $\mathcal A(F_{g,n})$ is generated by a set of $n(4^{g}-1)$ simple knots and $\frac{n(n-1)}{2} (4^{g})$ simple arcs.
\end{cor}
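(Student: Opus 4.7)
The plan is to appeal to Theorem \ref{thm:simplegenerators}, which reduces the problem to counting three families of generators: (a) simple arcs in $F_{g,n}^*$ of total complexity $(0,0,0)$, (b) simple knots in $F_{g,n}^*$ of total complexity $(0,0,0)$, and (c) simple knots in $F_{g,n-1}^*$ of total complexity $(0,0,1)$. By Lemma \ref{lem:determinehandle}, every such curve is completely specified by the set of handles it intersects, together with its pair of puncture endpoints in the arc case. Since $h=b=0$, the local behavior at each of the $g$ overlapping pairs falls into exactly four possibilities---miss both handles, cross only the first, cross only the second, or traverse both in the good manner of Figure~\ref{fig:good}---contributing a factor of $4^g$ to each family.

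For family (a), I would choose the two distinct punctures that serve as the arc's endpoints in $\binom{n}{2}=\frac{n(n-1)}{2}$ ways, and independently pick one of the $4^g$ overlapping-pair patterns. Since $r=0$ imposes no further restriction and every combination produces an honest crossing-free arc between the specified punctures, family (a) contributes $\frac{n(n-1)}{2}\,4^g$ simple arcs.

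For the knot families, the same combinatorial choices apply but the definition of a simple knot excludes curves that bound a disk containing zero or one puncture. In family (b) a knot meets no puncture handle, so it is specified purely by its overlapping-pair pattern; the trivial pattern produces a disk-bounding loop and must be discarded, leaving $4^g-1$ knots. In family (c) the condition $r=1$ forces the knot to wrap precisely one of the $n-1$ punctures of $F_{g,n-1}^*$, and for each such puncture the $4^g$ overlapping-pair patterns are admissible except the trivial one, which would give a small loop around a single puncture; this yields $(n-1)(4^g-1)$ knots. Summing the knot contributions gives $(4^g-1)+(n-1)(4^g-1)=n(4^g-1)$, matching the statement.

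The only genuine subtlety I anticipate lies in the knot count: one must check that the trivial overlapping-pair pattern is the sole exclusion imposed by the simplicity condition and, conversely, that every nontrivial combination actually realizes a simple knot in the ambient surface via Lemma~\ref{lem:determinehandle}. Once these realizability and distinctness assertions are verified, the arc count and the final totals follow immediately.
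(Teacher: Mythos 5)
Your proposal is correct and follows essentially the same route as the paper's own proof: invoke Theorem~\ref{thm:simplegenerators} and Lemma~\ref{lem:determinehandle}, count $4^g$ overlapping-pair patterns per family, choose endpoints in $\binom{n}{2}$ ways for arcs, and exclude exactly the trivial pattern in each knot family (bounding a disk, respectively a once-punctured disk) to get $(4^g-1)+(n-1)(4^g-1)=n(4^g-1)$ knots. The realizability subtlety you flag is handled in the paper implicitly by the construction in Lemma~\ref{lem:determinehandle}, so no further work is needed.
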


\begin{proof}
Recall from Lemma~\ref{lem:determinehandle} that simple arcs with total complexity $(0,0,0)$ are determined by the handles they meet and their endpoints.   There are  $\binom{n}{2} = \frac{n(n-1)}{2}$ choices of distinct endpoints.  A simple arc with total complexity $(0,0,0)$ does not pass through any puncture handles.  For every pair of overlapping handles, such an arc can either go through neither handle, exactly one of the two handles, or both handles in a good way.     This gives four choices for each pair of handles and thus there are $ \binom{n}{2}4^{g}$ simple arcs in the generating set.

Simple knots with total complexity $(0,0,0)$ are also determined by the handles that they meet, and a similar counting argument as the one for simple arcs also applies.  
Taking away the unknot that passes through no handles whatsoever, it follows that there are $4^{g} -1$ simple knots with total complexity $(0,0,0)$.    A simple knot which lies in $F_{g, n-1}^*$ with total complexity $(0,0,1)$ passes through exactly one of the $n-1$ puncture handles.   For every pair of overlapping handles there are again four possibilities.  However, we must discount those simple knots that pass through one puncture handle but no handles from an overlapping pair, since they enclose either a disk or a once-punctured disk and hence are not simple.  Thus there are $(n-1) (4^{g} - 1)$ simple knots with total complexity $(0,0,1)$.  
In total, this gives $n(4^{g}-1)$ simple knots in the generating set.  
\end{proof}

\subsection{Remarks}
Firstly, note that the number of generators from Corollary~\ref{thm:finitegen} is sharp for the cases where the surface is a 2- or 3-punctured sphere or a 1-punctured torus, as was demonstrated in the companion paper \cite{BobbPeiferKennedyWong} by determining the relations between the generators.  

In the case of a sphere with $n$ punctures, the arc algebra is generated by a set of arcs with exactly one generator between every pair of punctures. 
While the sphere arc algebras have a finite generating set of only simple arcs, in general the authors do not know of a way to generate the arc algebra with only arcs when $A$ is generic and the surface has positive genus.   

However, if one is willing to divide by $A-A^{-1}$, then the arc algebra for any surface can be generated with only arcs.  The trick is to take the puncture-skein relation and its reverse, and then solve for any of the two diagrams which do not intersect at the puncture.    Thus the set of all zero complexity arcs (those with distinct endpoints and those with both endpoints at the same puncture) would generate the arc algebra.
As there are $n$ punctures and two ways for a pair of strands to meet at a puncture, the arc algebra $\mathcal A(F_{g,n})$ is generated by $\frac{n(n-1)}{2}(4^{g}) + 2n(4^{g}) = \frac{n(n+3)}{2} (4^g)$ arcs, so long as $A-A^{-1}$ is invertible. 

On the other hand, it is impossible to generate the arc algebra with only links when $n>1$, no matter what extra conditions we place on $A$.  Consider all the arcs that could be generated by a set of links.  These arcs could only be created using the puncture-skein relation, and each such arc would have an even number of strands connected to each puncture.  In particular, any connected arc whose endpoints are distinct cannot be generated by a set of links.

Finally, we remark that Bullock in \cite{Bu99} provides a generating set for the skein algebra consisting of $2^{(2g+n-1)} - 1$ elements.
Przytycki and Sikora show in \cite{PrzSikora} that this number, which is exponential in both $g$ and $n$,  is minimal for the skein algebra.  Corollary~\ref{thm:finitegen} says that the arc algebra is generated using $n  (4^{g}-1) + \frac{n(n-1)}{2} \, (4^{g})$, which is exponential in $g$ but only quadratic in $n$.   In particular, when $n \ge 5$, the number of generators for the arc algebra described here is strictly less than the number of generators needed for the usual skein algebra.

\bibliographystyle{plain}
\nocite{*}

\end{document}